\newcommand{\xiaowuhao}{\fontsize{9pt}{\baselineskip}\selectfont}
\newtheorem{THM}{\textbf{Theorem}}[section]
\newtheorem{DEF}{\textbf{Definition}}
\newtheorem{LEM}[THM]{Lemma}
\newtheorem{CON}{\textbf{Conjecture}}
\newtheorem{COR}[THM]{Corollary}
\newtheorem{REM}{\textbf{Remark}}
\newtheorem{CLA}{\textbf{Claim}}[section]
\newtheorem{case}{Case}
\newtheorem{subcase}{Case}
\numberwithin{subcase}{case}
\newtheorem{subsubcase}{Case}
\numberwithin{subsubcase}{subcase}
\newcommand{\CC}{\mathcal{C}}
\newcommand{\D}{\Delta}
\newcommand{\phibar}{\overline{\varphi}}
\newcommand{\phiv}{\varphi}
\begin{document}

\title{Structural properties of edge-chromatic critical multigraphs}

\author{Guantao Chen and  Guangming Jing\\
{\xiaowuhao Department of Mathematics and Statistics, Georgia State University, Atlanta, GA\,30303 }\\
}

\date{}

\maketitle

\begin{abstract}

Let $G$  be a graph with possible multiple edges but no loops. The density of $G$, denoted by $\rho(G)$, is defined as $\max_{H\subset G,|V(H)|\geq 2}$ $ \lceil \frac{|E(H)|}{\lfloor |V(H)|/2\rfloor}\rceil$. Goldberg\,(1973) and Seymour\,(1974) independently conjectured that if the chromatic index $\chi'(G)$ satisfies $\chi'(G)\ge \Delta(G)+2$ then $\chi'(G) = \rho(G)$, which is commonly regarded as Goldberg's conjecture.  An equivalent conjecture, usually credited to Jakobsen, states that for any odd integer $m \ge 3$, if $\chi'(G) \ge \frac{m\D(G)}{m-1} + \frac{m-3}{m-1}$ then $\chi' (G)= \rho(G)$.  Tashkinov tree technique, a common generalization of Vizing fans and Kierstead paths for multigraphs, has emerged as the main tool to attack these two conjectures. On the other hand, Asplund and McDonald recently showed that there is a limitation of this method. In this paper, we will go beyond Tashkinov Tree and provide a much larger extended structure, by which we see hope to tackle   the conjecture. Applying this new technique,  we show that the Goldberg's conjecture  holds for graphs with $\D(G) \le 39$ or $|V(G)| \le 39$ and the Jakobsen Conjecture holds for $m \le 39$, where the previously known best bound is $23$.  We also improve a number of other related results.
\end{abstract}

\emph{\indent \textbf{Keywords}.} Edge chromatic index; graph density; critical chromatic graph; Tashkinov tree; Extended Tashkinov tree


\section{Introduction}
Graphs in this paper may contain multiple edges but no loops. We will generally follow the notation and terminology defined by Stiebits et al. in~\cite{StiebSTF-Book}. 
Let $G$ be a graph with vertex set $V(G)$ and edge
set $E(G)$.  Denote by $\D(G)$ and $\mu(G)$ the maximum degree and the multiplicity of $G$, respectively. 
When $G$ is clear, we simply denote $\D(G)$ and $\mu(G)$ by $\D$ and $\mu$ respectively for convenience. 
A {\bf $k$-edge-coloring} of a graph $G$ is a map $\phiv$:   $E(G) \longrightarrow \{1, 2, \dots, k\}$ that assigns to every edge $e$ of $G$ a color $\phiv(e) \in \{1, 2, \dots, k\}$ such that no two adjacent edges of $G$ receive the same color.  Denote by  $\CC^k(G)$  the set of all $k$-edge-colorings of $G$.  The {\bf  chromatic index $\chi' :=\chi'(G)$} is the least  integer $k\ge 0$ such that   $\CC^k(G) \ne \emptyset$.     Clearly,  $\chi' \ge \Delta$.  Conversely, Vizing~\cite{Vizing64} showed that $\chi' \le \D + \mu$. The gap between $\D$ and $\D + \mu$ may be large since $\mu$ is unbounded.  To compute the exact value of $\chi'(G)$,
people consider the density $\rho:=\rho(G)$ of $G$ defined below.
\[
\rho(G) := \max_{H\subseteq G,|V(H)|\geq 2|} \lceil \frac{|E(H)|}{\lfloor |V(H)|/2\rfloor}\rceil
\]
Since the edges of $G$ with the same color form a matching, we have $|E(H)|\leq \chi'(G)\lfloor|V(H)|/2\rfloor$ for any $H\subseteq G$. Thus $\chi'(G)\geq\rho(G)$.
A graph  is $G$ called {\bf  elementary} if $\chi' (G)= \rho(G)$. 
 Goldberg\,(1973)~\cite{Goldberg}  and Seymour\,(1974)~\cite{Seymour} independently made the following conjecture, which is commonly referred as Goldberg's conjecture.

\begin{CON}\label{con:GAS}
	If $G$ is a graph with  $\chi' \ge \Delta+2$,  then  $G$ is elementary.
\end{CON}
As mentioned in \cite{ChenGKPS16+}, Goldberg's conjecture is equivalent to saying that if $\chi' \ge \D +2$, then it is the ceiling of the {\it fractional chromatic index} of $G$, which can be computed in polynomial time.  Consequently, the NP-completeness of determining $\chi'$ lies in deciding whether $\chi' = \D, \, \D +1, $ or $\ge \D +2$.  Hence, Goldberg's conjecture
is interesting from a computational complexity standpoint.   This conjecture and topics surrounding it  are  featured in  the book~\cite{StiebSTF-Book} of Stiebitz, Scheide, Toft and Favrholdt and the elegant survey~\cite{McDonaldSurvey15} of McDonald.

A graph  $G$ is called {\bf $k$-critical} if
$\chi'(G) = k+1$ and $\chi'(H) \le k$ for every proper subgraph $H$ of $G$. We also call a graph {\it critical} if it is $k$-critical for some $k\ge \D$. 
Jakobsen in~\cite{Jakobsen73} made the following weaker  conjecture.
\begin{CON}\label{con:Jm1}
	Let $G$ be a critical graph. If $\chi' > \frac{m}{m-1}\D + \frac{m-3}{m-1}$ for an odd integer $m \ge 3$, then
	$|V(G)| \le m-2$.
\end{CON}
Historically, the following conjecture, named the Jakobsen Conjecture, has been investigated intensively in the past. Clearly, the conjecture is equivalent to Goldberg's conjecture and  provides a ``scaler'' for proving Goldberg's conjecture.
\begin{CON}\label{con:Jm}
	If $G$ is a graph with
	$\chi'>\frac{m}{m-1}\Delta+\frac{m-3}{m-1}$ for an odd integer $m \ge 3$,  then $G$ is elementary.
\end{CON}
Clearly, if the Jakobsen Conjecture holds for an odd integer $m$ then it holds for every odd integer $m'$ with $m' \le m$. The Jakobsen Conjecture has been confirmed slowly for $m\le 23$ by a series of papers over the last 40 years: $m=5$ independently by Andersen~\cite{Andersen} (1977), Goldberg~\cite{Goldberg} (1973),  and S{\o}rensen\,(unpublished, page 158 in \cite{StiebSTF-Book});  $m=7$ independently by Andersen~\cite{Andersen} (1977)
and S{\o}rensen (unpublished, page 158 in \cite{StiebSTF-Book});
$m=9$ by  Goldberg~\cite{Goldberg-1984} (1984); $m=11$ independently  by Nishizeki and Kashiwagi~\cite{Nishizeki-Kashiwagi-1990} (1990) and by Tashkinov~\cite{Tashkinov-2000} (2000); $m=13$  by Favrholdt, Stiebitz and Toft~\cite{FavrST2006} (2006) ;  $m=15$ by   Scheide~\cite{Scheide-2010} (2010); and $m=23$ by 
Chen et al.~\cite{ChenGKPS16+}. 
Applying our technique result, we show in this paper that the Jakobsen Conjecture is true up to $m =39$. 

A {\bf $k$-triple} $(G, e, \phiv)$ consists of a $k$-critical graph $G$ with $k \ge \D +1$, an edge $e\in E(G)$ and a coloring $\phiv \in \CC^k(G-e)$. Note that in the above definition we require $k \ge \D +1$, so $\chi' =k+1\geq\D+2$.
Let $(G, e, \phiv)$ be  a $k$-triple. 
For a vertex $v\in V(G)$, denote by  $\phiv(v)$  and $\phibar(v)$ the sets of colors assigned and not-assigned  to edges incident $v$, respectively.  Colors in $\phiv(v)$ and $\phibar(v)$ are called {\it seen} and {\it missing} at $v$, respectively.  For each color $\alpha$, let $E_{\alpha}=\{e\in E(G):\phiv(e) = \alpha\}$.  Clearly,  $E_{\alpha}$ is a matching of $G$ and $G[E_{\alpha}\cup E_{\beta}]$ is a union of disjoint paths or even cycles  with edges alternatively colored with $\alpha$ and $\beta$, named {\it $(\alpha, \beta)$-chains}. 
If we interchange the colors $\alpha$ and $\beta$ on an $(\alpha, \beta)$-chain $C$, then we obtain a new $k$-edge-coloring $\phiv^*$ of $G$. In this case, we say the coloring $\phiv^*$ is obtained from $\phiv$ by {\it recoloring} $C$, and we denote $\phiv^*= \phiv/C$. This operation is called a {\it Kempe change}.  In this paper, our recoloring techniques are based on Kempe changes. 
An $(\alpha,\beta)$-chain is also called an {\it $(\alpha,\beta)$-path} if it is indeed a path.  For each vertex $v\in V(G)$ with $\alpha\in\phibar(v)$ or $\beta\in\phibar(v)$, denote by $P_v(\alpha, \beta, \phiv)$ the unique $(\alpha, \beta)$-path containing $v$. 
Starting from the vertex $v$, we also notice that path $P_v(\alpha, \beta, \phiv)$ naturally   generates a linear order $ \preceq_{P_v(\alpha, \beta, \phiv)}$ for all vertices on the path, i.e.,  $x  \preceq_{P_v(\alpha, \beta, \phiv)} y$ if and only if $x$ is between $v$ and $y$ in $P_u(\alpha, \beta, \phiv)$.
 For any subgraph $H$ of $G$, let 
$\phiv_e(H)  :=   \phiv(E(H)) =  \{\phiv(f)  \ : \ f\in E(H-e)\}$ 
and call each color in $\phiv_e(H)$ an $H$-used color; let $\phibar_v(H) = \cup_{v\in V(H)} \phibar(v)$ and call each color in $\phibar_v(H)$ an {\it $H$-missing} color. Edges with exactly one end-vertex in $V(H)$ are called {\it boundary } edges of $H$.  Denote by $\partial (H)$ the set of boundary edges of $H$. Denote $\partial_{\alpha,\varphi}(H)=\{f:f\in\partial(H),\varphi(f)=\alpha\}$. If $\varphi$ is understandable, we sometimes drop the coloring $\varphi$ and denote $\partial_{\alpha}(H)=\{f:f\in\partial(H),\varphi(f)=\alpha\}$.
\begin{itemize}
	\item  We call  a vertex set $X \subseteq V(G)$ {\it elementary} if $\phibar(v) \cap \phibar(w) = \emptyset$ for any two distinct vertices $v, w\in X$.
	\item  We call a subgraph $H$ {\bf closed} if $\phiv_e(\partial(H)) \cap \phibar_v(H) = \emptyset$, i.e., no color of a boundary edge is $H$-missing.  Moreover, we call $H$ {\bf  strongly closed} if $H$ is closed and all edges in $\partial(H)$ are colored differently.
\end{itemize}

The above two concepts have played important roles in recent development of graph edge chromatic theory. Goldberg's conjecture is equivalent to saying that for every $k$-triple $(G, e, \phiv)$, $V(G)$ is elementary  or there exists a subgraph $H$ of $G$ with $e\in E(H)$ such that $H$ is strongly closed and $V(H)$ is  elementary. Note that if $V(G)$ is elementary, then it is easy to see $\rho(G)=\chi'(G)$. Hence it creates no confusion that we call a subgraph $H$ of $G$ {\bf elementary} if $V(H)$ is elementary. Starting with Vizing's classic result~\cite{Vizing64}, searching for large elementary subgraphs has a long history in the study of graph edge chromatic theory.  Tashkinov~\cite{Tashkinov-2000} developed a method to find some special elementary and closed trees in a $k$-triple. Such trees are called Tashkinov trees.
There are a number of results~\cite{ChenGKPS16+,CYZ-2011,Scheide-2010,FavrST2006} extending Tashkinov trees to larger elementary trees and there are a number of results~\cite{HaxellK2015,McDonald11,Scheide-2010,FavrST2006} discovering some structural properties from the closed property of maximal Tashkinov trees. However, to the best of our knowledge there are no results extending Tashkinov trees to larger trees inheriting both elementary and closed properties.  Given a $k$-triple $(G, e, \phiv)$ and a closed elementary subgraph $H\subseteq V(G)$, we basically show that under some minor conditions, if there exists  a vertex $x\notin X$ such that $V(H)\cup\{x\}$ is elementary, then there exists a closed elementary subgraph $H'$ with $V(H')\supseteq V(H)\cup\{x\}$.  
Applying our results, we improve almost all known results in this area. Our main result will be stated in Section~\ref{bTashsection} after giving formal definitions of Tashkinov trees and their extensions with some properties. In Section~\ref{bTashsection}, we will also show some applications of our results; and we will give the proof of our main result in Section~\ref{bmainproof} due to its length. Section~\ref{4} gives a proof of a basic application of our main Theorem. The proof is very long, but it contains some techniques and ideas which may shed some lights in attacking Goldberg's conjecture.

\section{Tashkinov trees and their extensions}\label{bTashsection}

Let $G$ be a graph and  $e\in E(G)$. A {\bf tree-sequence} $T$ is an alternating sequence $(y_0, e_1, y_1,  e_2,  \cdots,y_{p-1}, e_p, y_p)$ of distinct vertices $y_i$ and edges $e_i$ of $G$  such that  $e_1=e$ and the endvertices of each $e_i$ are $y_{i}$ and $y_r$ for some $r\in \{1, 2, \dots, i-1\}$. Clearly, the edge set of a tree sequence $T$ indeed  induces a tree; and following  the sequence,  all vertices and edges in  $T$ form a linear order $\prec_{\ell}$.  For every element $x\in T$, let $T_x$ be the sequence generated by $x$ and elements $\prec_{\ell} x$, and call it an $x$-{\it segment}. Note that here $x$ could be an edge or a vertex. Denote by $|T|$ the number of vertices in $T$, i.e., $|T|=p+1$ from the above definition. For each edge $f\in\partial(T)$, denote by $a(T,f)$ and $b(T,f)$ the endvertices of $f$ in $T$ and not in $T$, and name them the {\it in-end} and
{\it out-end} of $f$, respectively. If $T$ is understandable, we simply use $a(f)$ and $b(f)$ for convenience. 

 Let $\phiv$ be a $k$-edge-coloring of $G-e$.  For a color $\alpha$, denote by $v(\alpha, T)$ the first vertex missing color $\alpha$ along $\prec_{\ell}$ of $T$ if $\alpha\in\phibar_v(T)$ and the last vertex of $T$ if $\alpha\notin\phibar_v(T)$. If $T$ is clear, we may simply denote $v(\alpha,T)$ by $v(\alpha)$. We sometimes denote $T_{v(\alpha)}$ by $T(v(\alpha))$.

A {\bf Tashkinov tree} of a $k$-triple $(G, e, \phiv)$ is a tree-sequence $T=(y_0, e_1, y_1, \allowbreak e_2,  \cdots,y_{p-1}, e_p, y_p)$  such that  for each $j \ge 1$, $\phiv(e_j)\in \phibar(y_i)$  for some $i<j$.  A Tashkinov tree $T$ is {\it maximal} if there is no Tashkinov tree $T^*$ of the same $k$-triple containing $T$ as a proper subtree. Clearly, all maximal Tashkinov trees are closed.    A Tashkinov tree is called {\it maximum} if $|T|$ is maximum over all $k$-triples with respect to the same graph $G$. 

\begin{THM}\label{bTHM:TashOrigi} {\em [Tashkinov~\cite{Tashkinov-2000}]}
The vertex set of any Tashkinov tree of a $k$-triple $(G, e, \phiv)$  is elementary.
\end{THM}

Let $(G,e,\phiv)$ be a $k$-triple and $H$ be a closed subgraph of $G$.  A color $\delta$ is called a {\it defective color} of $H$ if
$|\partial_{\delta}(H)| > 1$.  Since $H$ is closed, we have $\delta\notin\phibar_v(H)$ in this case.     An edge $f\in \partial(H)$ is called a {\bf connecting edge} of $H$  if $\delta:=\phiv(f)$ is a defective color of $H$ and  there exists a color $\gamma\in  \phibar_v(H) - \phiv_e(H)$  such that $f\in P_{v(\gamma)}(\delta, \gamma, \phiv)$ and
 $f$ is the first edge of $\partial(H)$ along $P_{v(\gamma)}(\delta, \gamma, \phiv)$ starting at $v(\gamma)$.   In this case, we call $\delta$ a {\it connecting} color and $\gamma$ the {\it companion} color of $\delta$. Note that $\gamma\in  \phibar_v(H) - \phiv_e(H)$ means that color $\gamma$ is missing at a vertex in $H$ and is not assigned to any edge of $H$.  

\begin{DEF}\label{bDEF:ETT}
An Extended Tashkinov Tree {\em ({\bf ETT})} of a $k$-triple 
$(G, e, \phiv)$ is a tree-sequence 
$T=(y_0,e_1,y_1,e_2,...,y_{p-1},e_p,y_p)$
such that for each  $e_i$ with $i \ge 2$,  either $\varphi(e_i)\in\phibar_v(T_{y_{i-1}})$  or $T_{y_{i-1}}$ is closed and $e_i$ is a connecting edge of $T_{y_{i-1}}$.
\end{DEF}
Note that in the above definition, the condition imposed on $e_i$ only involves edges incident to $V(T_{y_{i-1}})$. So, if a coloring $\phiv^*$ agrees with $\phiv$ on every edge incident to $V(T_{y_{p-1}})$, then $T$ is also an ETT of $(G,e, \phiv^*)$.  This observation will be used later in our proof.

Let $T$ be an ETT of a $k$-triple $(G,e, \phiv)$. 
 Let $f_1, \, f_2$, $\dots, f_{n}$ be all the connecting edges of $T$ with  $f_1\prec_{\ell} f_2\prec_{\ell} \dots \prec_{\ell} f_{n}$ and denote $T_i =T_{f_i}-\{f_i\}$ for each $1\le i \le n$. Clearly,  $T_1$ is a maximal Tashkinov tree of $(G, e, \phiv)$ and $T_i$ is closed for every $1\le i\le n$.  We call $T_1\subset T_2 \subset T_3 \subset \dots \subset T_n \subset T$ the {\it ladder of $T$} and $T$ an {\it ETT with $n$ rungs}. We use $m(T)$ to denote the number of rungs of $T$.   Let $D(T) = \{\delta_1, \delta_2, \dots, \delta_n\}$ and $\Gamma(T) =\{\gamma_1, \gamma_2, \dots, \gamma_n\}$ denote the lists of all connecting colors and their companioning colors, respectively.   
 
\begin{DEF}\label{Def-Stable}
 Let $T$ be an ETT of a $k$-triple $(G,e, \phiv)$ with ladder $T_1\subset T_2 \subset T_3 \subset \dots \subset T_n \subset T$. Let $D(T) = \{\delta_1, \delta_2, \dots, \delta_n\}$ and $\Gamma(T) =\{\gamma_1, \gamma_2, \dots, \gamma_n\}$. We say a coloring $\phiv^*\in \CC^k(G-e)$ is {\bf $T$-stable} with regard to $\phiv$ and $T$ is {\bf $\phiv^*/\phiv$-stable} if  the following two conditions are satisfied.
\begin{itemize}
\item   Of the $k$-triple $(G, e, \phiv^*)$, the tree-sequence $T$ is also an ETT with the same sets of connecting edges, connecting colors and companion colors. 
\item  For each $1\leq i \leq n$ and every $f$ incident to $V(T_n)$, 
 $\varphi(f)=\varphi^*(f)$ if $\varphi(f)\in\{\delta_i,\gamma_i\}$ or $\varphi^*(f)\in\{\delta_i,\gamma_i\}$.
\end{itemize}
\end{DEF}
By Definition~\ref{Def-Stable}, we can easily check that: (a) $\phiv$ itself is $T$-stable  with regard to $\phiv$; (b) if $\phiv^*$ is $T$-stable with regard to $\phiv$ then $\phiv$ is $T$-stable with regard to $\phiv^*$;  and (c) if $\phiv^*$ is $T$-stable with regard to $\phiv$ and $\phiv^{**}$ is $T$-stable with regard to $\phiv^*$, then $\phiv^{**}$ is $T$-stable with regard to $\phiv$.  So, all $T$-stable colorings with regard to $\phiv$ form an equivalent class and can be with  regard to any coloring in the class. We call $\phiv^*$ a $T$-stable coloring and $T$ $\phiv^*$-stable for convenience.   
Clearly, if $\phiv^*$ is $T$-stable, then it is $T_x$-stable for any $x$-segment where $x$ is a vertex of $T$. Moreover, we have the following.

\begin{LEM}\label{stable}
Let $T$ be an ETT of a $k$-triple $(G, e, \phiv)$  and $y_p$ be the last vertex of $T$.  If a coloring $\varphi^*\in \CC^k(G-e)$ agrees with $\phiv$  on all edges incident to $V(T-y_p)$, then $\varphi^*$ is $T$-stable.
\end{LEM}
\begin{proof}
Let $T$, $(G, e, \phiv)$ and $\phiv^*$ be defined as in Lemma~\ref{stable}.   Since $\phiv^*$ agrees with  $\phiv$  on 
every edge incident to $V(T_{y_{p-1}})$,  $T$ is an ETT of $(G, e, \phiv^*)$. Let $T_1\subset T_2\subset \dots \subset T_n\subset T$ be the ladder of $T$. Since $T_n\subseteq T_{y_{p-1}}$, $\phiv^*$ agrees with $\phiv$ on every edge incident to $V(T_n)$.  
So, $\phiv^*$ is $T$-stable. \end{proof}

\begin{DEF}\label{Def-MP&R1}
Let $T$ be an ETT of a $k$-triple $(G, e, \phiv)$ with ladder 
$T_1\subset T_2 \subset \dots T_n \subset T$. 
\begin{itemize}
	\item We say that $T$ satisfies condition {\bf MP} (Maximum Property) if $T_1$ is a maximum Tashikov tree and for each $2\leq i \leq n$,  $T_i$ is closed among all $T_i$-stable colorings. 
	\item We say that $T$ satisfies condition {\bf R1} if for each  companion color  $\gamma_i$ of a connecting color $\delta_i$ with $1\leq i\leq n$, $\gamma_i \in \phibar_v(T_{m_i})- \phiv_e(T_{M_i})$, where $m_i$ and $M_i$ are  the minimum and maximum indices, respectively,  such that $\delta_{m_i} = \delta_i = \delta_{M_i}$.  
\end{itemize}

\end{DEF}

\begin{LEM}\label{MPR1}
	Let $T$ be an ETT of a $k$-triple $(G, e, \phiv)$ and $\phiv^*$ be a $T$-stable coloring with regard to $\phiv$.  If $T$ satisfies condition MP (resp. R1) under $\phiv$, then it satisfies condition MP (resp. R1) under $\phiv^*$.

\end{LEM}
\begin{proof} Let $T_1\subset T_2 \subset T_3 \subset \dots \subset T_n \subset T$ be the ladder of $T$ and $\Gamma(T) =\{\gamma_1, \gamma_2, \dots, \gamma_n\}$. Since $\phiv^*$ is $T$-stable, $T_1\subset T_2 \subset \dots T_n \subset T$ is the ladder of the ETT $T$ under $\phiv^*$.   Assume that $T$ satisfies condition MP under $\varphi^*$.  Clearly, $|T_1|$ is still maximum over all $k$-triples.  For each $1\leq i\leq n$, let $\phiv^{**}$ be an arbitrary $T_i$-stable coloring with regard to $\phiv^*$.  Then, it is a $T_i$-stable coloring with regard to $\phiv$. Since $T$ satisfies condition MP under $\phiv$, $T_i$ is closed under $\phiv^{**}$. Therefore, $T$ satisfies condition MP under $\phiv^*$. 
	
Assume that $T$ satisfies condition R1 under $\varphi$.  Then,  $\gamma_i\in \phibar_v(T_{m_i}) - \phiv_e(T_{M_i})$. Therefore $\gamma_i\in \phibar_v(T_{m_i})$ and $\gamma_i\notin \phiv_e(T_{M_i})$.  Since $\phiv^*$ is $T$-stable with regard to $\phiv$, $\phiv^*$ and $\phiv$ have the same set of $\gamma_i$ edges incident to $V(T_{M_i})$.  Hence $\gamma_i\in \phibar^*_v(T_{m_i})$ and $\gamma_i\notin \phiv^*_e(T_{M_i})$. Thus we have $\gamma_i \in \phibar^*_v(T_{m_i}) - \phiv^*_e(T_{M_i})$.  Therefore $T$ still satisfies condition R1 under $\varphi^*$.
\end{proof}

Let  $(G, e, \phiv)$ be a $k$-triple and $T$ be an ETT of $G$.  We call the algorithm of adding a boundary edge $f$ and $b(f)$ to $T$ with $\phiv(f) \in \phibar_v(T)$ Tashkinov Augment Algorithm ({\bf TAA}). 
Given an ETT with ladder $T_1\subset T_2 \subset \dots \subset T_n \subset T$, we note that conditions MP and R1 only apply to $T_i$ with $i \le n$.  So, the following result holds.

\begin{LEM}\label{bclear}
	Let $T$ be an ETT of a $k$-triple $(G,e,\varphi)$  satisfying conditions MP and R1. If $T'$ is an ETT obtained from $T$ by adding some new edges and vertices by TAA under $\varphi$, then $T'$ also satisfies conditions MP and R1 under $\varphi$.
\end{LEM}

The following is the main theorem of this paper.
\begin{THM}\label{bmain}
	Let $T$ be an ETT of a $k$-triple $(G,e,\varphi)$ with $G$ being non-elementary. If $T$ satisfies conditions MP and R1 under $\varphi$, then $T$ is elementary.
\end{THM}	

Note that if $m(T) = 0$, then $T$ is a Tashkinov tree, so it satisfies conditions MP and R1 by default. If $m(T)=1$ and $T_1$ is a maximum Tashkinov tree, then $T$ also satisfies both conditions
MP and R1.  

\begin{COR}\label{bT2}
 Let $T$ be a closed ETT of a $k$-triple $(G, e, \phiv)$ with $G$ being non-elementary. If $T$ satisfies MP and  all its connecting colors are distinct, $T$ is elementary. In particular, if $m(T)=1$ and $T_1$ is a maximum Tashkinov tree,  then $T$ is elementary.
\end{COR}	
\begin{proof}
We only need to verify that condition R1 is satisfied. Since all companion colors $\gamma_1$, $\gamma_2$, $\dots$, $\gamma_n$ are distinct, $m_i= M_i =i$ for each $1\le i\le n$. By the definition of connecting edge of $T_i$,  we have $\gamma_i\in \phibar_v(T_i) - \phiv_e(T_i)$. 
\end{proof}

In application, we will use the following result. Then stronger version of Theorem~\ref{bmain2a-0} will be given as Theorem~\ref{bmain2a} in Section~\ref{4}, and its proof is based on Theorem~\ref{bmain}.

\begin{THM}\label{bmain2a-0}
	Let $G$ be a $k$-critical graph with $k \ge \D +1$.  If $G$ is not elementary, then there exist a $k$-triple $(G, e, \phiv)$, a maximum Tashkinov tree $T_1$ and an elementary ETT $T \supset T_1$  such that the following hold.
	\begin{eqnarray}
    |T - T_{1}| & \ge & 2 |\phibar_v(T_{1})| +2 \label{beqn-(T-Tn)2-0}\\
	|T-T_1| & > & 2(1+\frac{\chi'-1-\Delta}{\mu})^{|\phibar_v(T_1)|}\label{beqn-(T-Tn)3-0}
	\end{eqnarray}
\end{THM}



\begin{LEM}{\em [Scheide~\cite{Scheide-2010}]}\label{bCOR:T-order-exit-vertex}
	Let $G$ be a $k$-critical graph with $k\ge \Delta +1$.  If $G$ is not elementary,    then 
	$|T|\ge max\{2(k-\Delta)+1,11\}$ for every maximum Tashkinov tree $T$ of $G$.  
\end{LEM}
Since we mainly work on non-elementary graphs in this paper, we assume that $|T|\geq 11$ for every maximum Tashkinov tree $T$ by Lemma~\ref{bCOR:T-order-exit-vertex}. Hence we have $|\phibar_v(T)|\geq 13$ if $T$ is a maximum Tashkinov tree, because $e$ is uncolored.
\begin{LEM}\label{bTHM:main3}
	If $G$ is a non-elementary $k$-critical graph $G$ with $k\ge \D +1$, then there exists a $k$-triple $(G, e, \phiv)$ and  an elementary ETT  $T$ such that  
	\begin{equation}
	|T| \ge \max\{ (2(k-\D) +1)^2+6,  22(k-\D)+17\}\geq 39.
	\end{equation}
	
\end{LEM}
\begin{proof}
By Theorem~\ref{bmain2a-0},  there exist a $k$-triple $(G, e, \phiv)$,  a maximum Tashkinov tree $T_1$ and an elementary ETT $T \supset T_1$  such that  the following holds. 
	\begin{equation*} 
	\begin{aligned}
	|T|&\geq 2|\phibar_v(T_1)|+|T_1|+2\\&\geq 2(|T_1|(k-\Delta)+2)+|T_1|+2\\&\geq 2 (\max\{2(k-\Delta)+1,11\}(k-\Delta)+2)+\max\{2(k-\Delta)+1,11\}+2\\&\geq\max\{ (2(k-\D) +1)^2+6,  22(k-\D)+17\}.
	\end{aligned}
	\end{equation*}
\end{proof}

The following result gives an improvement a result of
Chen et al.~\cite{ChenGKPS16+} that
 if $\chi' \ge \D +\sqrt[3]{\D/2}-1$ then $\chi' = \rho$.
\begin{THM}\label{bTHM:cubic}
If $G$ is a graph with $\D \ge 5$ and    $\chi'\geq\Delta+\sqrt[3]{\Delta/4}$,
then $G$ is elementary.
\end{THM}
\begin{proof}
We assume without loss of generality $G$ is critical, but not elementary. 
Since $\D \ge 5$, $\chi' \ge \D + \sqrt[3]{\D/4} \ge \D +2$.  By Lemma~\ref{bTHM:main3},  there exists a $k$-triple $(G, e, \phiv)$ and  an  elementary ETT 
	$T$ such that  $|T| \ge (2(k-\D) +1)^2+6$.  Since $T$ is elementary,  we have
\[
k\geq\phibar_v(T)\ge ((2(k-\D) +1)^2+6)(k-\Delta)+2,
\]
which  gives $k<\Delta+\sqrt[3]{\Delta/4} -1$,  a contradiction. 
\end{proof}

We now show that Conjecture~\ref{con:Jm} is true up to $m =39$.  The following  observation from~\cite{StiebSTF-Book} is needed. For completeness, we give its proof here.

\begin{LEM}\label{bLEM:elementary-Jm}
If $(G, e, \phiv)$ be a $k$-triple with 
$k>   \frac{m}{m-1}\Delta+\frac{m-3}{m-1}-1$, 
then $|X|\le m-1$
	for 
	every elementary set $X\subseteq V(G)$ with $V(e)\subseteq X$. 
\end{LEM}
\proof Suppose on the contrary $|X| \ge m$.  The  inequality $k>   \frac{m}{m-1}\Delta+\frac{m-3}{m-1}-1$ is equivalent to 
$k -  \D >\frac{\D -2}{m-1}$. 
Since $X$ is elementary,    $k\ge |\phibar(X)| \ge (k-\Delta)|X| +2 \ge m(k-\Delta) +2$.  
Subtracting $k-\D$, we get  $\Delta \ge (m-1)(k-\Delta) +2 > (\D -2)+2 = \Delta$,  
a contradiction. \qed

\begin{THM}\label{bTHM:Jm35}
If  $G$ is a graph with $\chi'>\frac{39}{38}\Delta+\frac{36}{38}$,
then $G$ is elementary.
\end{THM}
\begin{proof}
Otherwise, by Lemmas~\ref{bTHM:main3}, $G$ has an elementary set $X$ with $|X| \ge 39$, which gives a contradiction to 
Lemma~\ref{bLEM:elementary-Jm}.
\end{proof}

\begin{COR}\label{bCOR:Delta35}
Let $G$ be a graph with $\chi' \ge \D +2$. If $\Delta\le 39$ or $|V(G)|\le 39$,  then $G$ is elementary.
\end{COR}
\begin{proof} 
When $\D \le 39$, we have $\D +2 \ge \frac{38}{39}\D + \frac{36}{38}$, so $G$ is elementary by Theorem~\ref{bTHM:Jm35}.  If $G$ is not elementary, then $G$ contains an elementary ETT $T$ with $|T|\geq 39$ by Lemma~\ref{bTHM:main3}, and therefore $|V(G)|>39$. So $G$ is elementary if $|V(G)|\leq 39$.
\end{proof}
Haxell and McDonald~\cite{HaxellM11} obtained a necessary and sufficient condition for  $\chi' = \Delta + \mu$ when $\mu \ge \log_{5/4}\D +1$.  We improve this result lowing the lower bound of $\mu$.
\begin{THM}\label{bTHM-mu}
If $G$ is  a graph with multiplicity $\mu \ge \log_{\frac{5}{4}}\left(\log_{\frac{3}{2}}(\frac{\D}{2})\right) +1$, then $\chi' = \D + \mu$ if and only if $\rho = \D + \mu$, where $\rho$ is the density of $G$. 
\end{THM}
\proof Since $\rho\leq\chi'\leq \D+\mu$ as mentioned earlier, we have $\chi' = \D + \mu$ if $\rho = \D + \mu$. We now suppose
$\mu \ge \log_{5/4}\left(\log_{3/2}(\frac{\D}{2})\right) +1$ and $\chi' = \D + \mu$.  To show $\rho' = \D + \mu$, we only need to show that $\rho=\chi'$, i.e., $G$ is elementary.  Assume without loss of generality that $G$ is critical. Suppose on the contrary $G$ is not elementary.  By (2) of Theorem~\ref{bmain2a-0}, there exists an elementary ETT $T$ containing a maximum Tashkinov tree $T_1$ having the following property:
\begin{equation}\label{1}
|T-T_1| \ge 2 \left(1 + \frac{\chi' -1 -\D}{\mu}\right)^{|\phibar_v(T_1)|} >
2 \left(1+ \frac{\mu -1}{\mu}\right)^{(\mu -1)|T_1|+2}
\end{equation}
Here we have $\chi' -1 -\D=\mu-1$ and $|\phibar_v(T_1)|>(\mu-1)|T_1|+2$ because of the assumption that $\chi' = \D + \mu$ and the fact that $T_1$ is elementary as a Tashkinov tree.
 Hexell and McDonald~\cite{HaxellM11} gave a lower bound of $|T_1|$ bellow: 
 \begin{equation}\label{2}
 |T_1| \ge \left(1+\frac{\chi'-1-\Delta}{2\mu}\right)^{\chi'-1-\Delta}+1
 \ge \left(1 +\frac{\mu -1}{2\mu}\right)^{\mu -1} +1.
 \end{equation}
 Note that  $|\phibar_v(T)| \le \chi' -1$.
  Since $T$ is elementary, we have 
 $|\phibar_v(T)|\geq (\mu-1)|T|$. Recall that $\chi' -1 -\D=\mu-1$, we have 
$|T|(\chi'-1-\Delta)\leq\chi'-1$, so
 $(|T|-1)(\chi'-1-\Delta)\leq\Delta$, which is equivalent to \[
  (|T|-1)\leq \frac{\Delta}{\mu-1}.
\]
Since $|T|-1=|T-T_1|+|T_1|-1\geq |T-T_1|$, by~(\ref{1}) and~(\ref{2}) we have
\[ 2\left(1 +\frac{\mu -1}{\mu}\right)^{(\mu -1)(1 +\frac{\mu -1}{2\mu})^{\mu -1}}<\frac{\Delta}{\mu-1}.
\]
Hence
\[ 2(\mu-1)\left(1 +\frac{\mu -1}{\mu}\right)^{(\mu -1)(1 +\frac{\mu -1}{2\mu})^{\mu -1}}<\D.
\]

 Note that $\mu \ge 2$ and $2(\mu-1)\left(1 +\frac{\mu -1}{\mu}\right)^{(\mu -1)(1 +\frac{\mu -1}{2\mu})^{x -1}}$ is an increasing function of $\mu$ when we fix $x$ with $x\geq 2$, we get $\frac{3}{2}^{\frac{5}{
 		4}^{\mu-1}}<\frac{\D}{2}$ by plugging in $\mu=2$. Thus we have $\mu < \log_{\frac{5}{4}}\left(\log_{\frac{3}{2}}(\frac{\D}{2})\right)+1$, a contradiction.  \qed

Haxell and McDonald in the same paper proved that a graph is elementary if $\chi' \ge \D + 2\sqrt{\mu \log \D}$, where $\log$ denotes the natural logarithm.  We improve their result as follows.
\begin{THM}
Let $G$ be a graph with $\chi'>\D+1$. 
Then $G$ is elementary if 
$\chi' \geq \Delta+\min\{2\sqrt{\mu(\log\log\frac{\D}{2}+\log2\mu)},
\sqrt[3]{\mu\log\frac{\D}{2}}\}$.

\end{THM}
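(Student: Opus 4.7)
The plan is to adapt the strategy from the proof of Theorem~\ref{THM-mu} and carry the estimates through in two separate regimes corresponding to the two expressions inside the minimum. I would assume for contradiction that $G$ is non-elementary; without loss of generality $G$ is $k$-critical with $k = \chi' - 1 \geq \D + 1$, and set $r = k - \D$. Applying Theorem~\ref{main2a} produces an elementary ETT $T$ with ladder $T_1 \subset \cdots \subset T_n \subset T$ satisfying the exponential bound $|T - T_n| \geq 2(1 + r/\mu)^{|\phibar(T_n)|}$ already exploited in Theorem~\ref{THM-mu}. Elementarity of $T$ gives $|\phibar(T)| \geq r|V(T)| + 2$ while $|\phibar(T)| \leq k = \D + r$, whence $|V(T)| \leq \D/r$; since $T_1 \subseteq T_n$, the same estimate gives $|\phibar(T_n)| \geq r|T_1| + 2$. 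Chaining these together produces the master inequality
\[
\frac{\D}{r} \;\geq\; |V(T)| \;\geq\; |T - T_n| \;\geq\; 2\bigl(1 + \tfrac{r}{\mu}\bigr)^{r|T_1|+2}.
\]

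For the square-root alternative I would substitute the Haxell--McDonald bound $|T_1| \geq (1 + r/(2\mu))^{r} + 1$ into the master inequality and take logarithms twice. Using $(1 + r/(2\mu))^{r} \geq e^{r^2/(4\mu)}$ and $\log(1 + r/\mu) \geq r/(2\mu)$ in the regime $r \leq \mu$ (the opposite regime being handled by the cube-root branch), the double logarithm should compress to an inequality of the shape
\[
\frac{r^2}{4\mu} \;\leq\; \log\log\!\tfrac{\D}{2r} \,+\, O\!\bigl(\log\tfrac{r^2}{\mu}\bigr),
\]
whose secondary term is absorbed into the $\log(2\mu)$ summand to yield $r^2 \leq 4\mu(\log\log(\D/2) + \log(2\mu))$, contradicting the first alternative $\chi' - \D \geq 2\sqrt{\mu(\log\log(\D/2) + \log(2\mu))}$ of the hypothesis. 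For the cube-root alternative I would discard the Haxell--McDonald input and substitute only Scheide's bound $|T_1| \geq 2r + 1$ from Lemma~\ref{COR:T-order-exit-vertex}; the master inequality then simplifies to
\[
\log\!\tfrac{\D}{2r} \;\geq\; \bigl(r(2r+1)+2\bigr)\log\!\bigl(1+\tfrac{r}{\mu}\bigr) \;\geq\; \tfrac{r^3}{\mu},
\]
giving $r^3 \leq \mu\log(\D/2)$ and contradicting the second alternative $\chi' - \D \geq \sqrt[3]{\mu\log(\D/2)}$. Since the hypothesis forces $\chi' - \D$ to exceed at least one of the two expressions inside the minimum, at least one of the contradictions is reached.

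The main obstacle will be the constant bookkeeping in the square-root branch, where the Haxell--McDonald factor $(1+r/(2\mu))^{r}$ enters inside the outer exponential to produce a doubly-iterated logarithm; extracting precisely the form $\log\log(\D/2) + \log(2\mu)$ will require tracking and carefully absorbing a secondary $\log(r^2/\mu)$ term. A minor secondary check is that the regime $r \leq \mu$ actually covers all cases: for $r > \mu$ the base $1 + r/\mu$ exceeds $2$, and the master inequality already forces a far tighter bound on $r$, so both target inequalities hold automatically and no estimate is lost in that regime.
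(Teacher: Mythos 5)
Your proposal follows essentially the same route as the paper's proof: you assume a non-elementary critical counterexample, combine the exponential tail bound (\ref{eqn-(T-Tn)3}) of Theorem~\ref{main2a} with the elementary-set inequality $|\phibar(T)|\le \chi'-1$, and then feed in the Haxell--McDonald bound on $|T_1|$ for the square-root branch and Scheide's bound $|T_1|\ge 2r+1$ (Lemma~\ref{COR:T-order-exit-vertex}) for the cube-root branch, which is exactly the paper's argument. The only point you handle more awkwardly is the regime $r>\mu$: it is in fact vacuous, since Vizing's theorem gives $r=\chi'-1-\D\le\mu-1<\mu$, which is how the paper justifies the estimates $1+r/\mu>e^{r/2\mu}$ and $1+r/2\mu>e^{r/4\mu}$ without any case split.
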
	
\proof Let $G$ be a graph satisfying the above conditions and assume on the contrary
$G$ is not elementary.  Assume without loss of generality that $G$ is critical.  Let $t=\chi'-1-\D$ and $T$, $T_1$ be as defined in Theorem~\ref{bmain2a-0}. Following similar arguments as in the proof of Theorem~\ref{bTHM-mu}, we have the following inequality:
\begin{equation}
t \cdot (|T-T_1| + |T_1|) +2 \le |\phibar_v(T)| \le \chi' -1 = \D +t \label{bcomb-3}
\end{equation}
 Combine (\ref{1}) and (\ref{2}) with (\ref{bcomb-3}),   we have 
 \[
 t\left(1+\frac{t}{\mu}\right)^{t\left(1+\frac{t}{2\mu}\right)^{t}}<\frac{\Delta}{2}.
 \]
Since $0<t/\mu<1$, we have $1+t/2\mu>e^{t/4\mu}$ and $1+t/\mu>e^{t/2\mu}$, which in turn gives $te^{\frac{t^2}{2\mu}e^{\frac{t^2}{4\mu}}}<\frac{\Delta}{2}$. Since $t\geq 1$, we have
$t <  2\sqrt{\mu(\log\log\frac{\D}{2}+\log2\mu)}$ by plugging $t=1$ into $te^{\frac{t^2}{2\mu}}$. 
By Lemma~\ref{bCOR:T-order-exit-vertex}, we have $|T_1| \ge 2t +1$.  Using this inequality with (\ref{1}) and (\ref{bcomb-3}), we have $t\left(1+\frac{t}{\mu}\right)^{t(2t+1)+2}<\frac{\D}{2}$. Since $1+t/\mu>e^{t/2\mu}$ and $t\geq 1$, we similarly have $t<  \sqrt[3]{\mu\log\frac{\D}{2}}$. Thus we have $t< \min\{2\sqrt{\mu(\log\log\frac{\D}{2}+\log2\mu)},
\sqrt[3]{\mu\log\frac{\D}{2}}\}$, giving a contradiction.
\qed

\section{Condition R2}\label{pre}

Let $(G, e,\phiv)$ be a $k$-triple.  For a color set $B$ and a subgraph $H\subseteq G$, we call $H$ is $B$-closed if $\phiv(\partial(H)) \cap B = \emptyset$ and
$H$ is $B^-$-closed if $H$ is $(\phibar_v(H)-B)$-closed.  Clearly, $H$ is closed if $H$ is $\phibar_v(H)$-closed. When $B=\{\beta\}$ is a singleton, we say $H$ is $\beta$-closed if it is $\{\beta\}$-closed. We also say $\beta$ is closed in $H$ and $H$ is closed for $\beta$ for convenience if $H$ is $\beta$-closed.  Let $T$ be an ETT with ladder $T_1\subset  T_2 \subset \dots \subset T_n\subset T$ of a $k$-triple $(G, e,\phiv)$.  We call the subsequence $T-T_n$ the {\it tail} of $T$ and any nested sequence of segments $T_{n,0}(=T_n) \subset T_{n,1}\subset \dots \subset T_{n,q}\subset T_{n,q+1} (=T)$ a {\it split tail} for $T$ if each $T_{n, j}$ ends with a vertex of $T -T_n$.  We further call the sequence $T_1\subset  T_2 \subset 	\dots \subset T_n:=T_{n,0} \subset T_{n,1}\subset 	\dots \subset T_{n,q}\subset T:=T_{n,q+1}$ a {\it refinery of $T$ with $n$ rungs and $q$ splitters}, or simply a {\it refinery} of $T$. For each $T_{n,j}$ with $0\leq j\leq q$, let $D_{n,j}=D(T)-\phibar_v(T_{n,j})=\{\delta_1,...,\delta_n\}-\phibar_v(T_{n,j})$. Clearly, $D_{n,q+1}\subseteq D_{n,q}\subseteq,...,\subseteq D_{n,0}$.

\begin{DEF} \label{Defi-R2} 
An ETT $T$  satisfies condition {\bf R2} if  $T$ has a refinery 
$$
T_1\subset  T_2 \subset \dots \subset T_n=T_{n,0} \subset T_{n,1}\subset 	\dots \subset T_{n,q}\subset T_{n,q+1} = T
$$
such that 
for every  $0 \le j \le q$ and every $\delta_h\in D_{n,j}$, there exists a two color set $\Gamma^{j}_h=\{\gamma^{j}_{h_1},\gamma^{j}_{h_2}\}$ satisfying 
the following four properties. 

\vspace{-0.2in}
\begin{enumerate}
\item $\Gamma^{j}_h\subseteq\phibar_v(T_{n,j})-\varphi_e(T_{n,j+1}(v(\delta_h))-T_{n,j})$ for every $j$ and $\delta_h\in D_{n, j}$.
		
\item  $\Gamma^{j}_{h}\cap\Gamma^{j}_{g}=\emptyset$ for every $j$ and two distinct $\delta_h,\delta_g\in D_{n,j}$.
\item $\Gamma^{j}-\Gamma^{j-1}\subseteq\phibar_v(T_{n,j}-T_{n,j-1})$ for each $1\leq j\leq q$, \\ where $\Gamma^{j}=\cup_{\delta_h\in D_{n,j}}\Gamma^{j}_h$  and $\Gamma^{j-1}=\cup_{\delta_h\in D_{n,j-1}}\Gamma^{j-1}_h$.
		
\item $T_{n,j}$ is $(\cup_{\delta_h\in D_{n,j}}\Gamma^{j-1}_h)^-$-closed for every $1\le j \le q$. 
\end{enumerate}	
\end{DEF}

\begin{REM}
{\em Not every ETT $T$ satisfies condition R2. We will show in statement B of Section~\ref{bmainproof} that for every $T$ satisfying conditions MP and R1, there exists an ETT $T'$ with $V(T') \supseteq V(T)$ satisfying conditions MP, R1 and R2. Since switching colors $\delta_i$ with another color on a color alternating chain usually creates a non-stable coloring, we may use colors in $\Gamma^{j}_h$ as stepping stones to swap colors while keeping the coloring stable in later proofs. Thus, we may consider the set $\Gamma^{j}_h$ as a color set reserved for $\delta_h$  and (1) as a condition that ensures the ETT properties after some changes of colorings.  We also notice that (1) and (2) actually involve $T_{n,q+1}$ for $j=q$ while (3) and (4) only involve $T_{n,q}$.  	
}		
	
\end{REM}

Let $T$ be an ETT of $(G,e,\varphi)$ satisfying condition R2. In the remainder of the proof, when we mention that $T$ satisfies condition R2 under another coloring $\varphi^*$ associated with $\varphi$, we always mean that $T$ satisfies condition R2 under $\varphi^*$ with the same refinery and $\Gamma_h^{j}$ as under $\varphi$ for every $0\leq j\leq q$ and every $\delta_h\in D_{n,j}$. Let $\alpha,\beta$ be two colors and $Q$ be an $(\alpha,\beta)$-chain.  If $V(Q)\cap V(T)\neq\emptyset$, we say $Q$ {\it intersects} $T$.



\begin{LEM}\label{bLEM:Stable}
Let $T$ be an ETT of a $k$-triple $(G, e, \phiv)$ and  $\phiv^*$  be obtained from $\phiv$ by recoloring some $(\alpha, \beta)$-chains. If these $(\alpha, \beta)$-chains do not intersect $T-y_p$,  then $T$ is $\phiv/\phiv^*$-stable.  Moreover,  if $T$ additionally satisfies condition MP (resp. R1, resp. R2)  under $\varphi$, then it also satisfies condition MP (resp. R1, resp. R2)  under $\varphi^*$.
\end{LEM}
\proof  
	Since $\varphi^*$ and $\varphi$ agree on all edges incident to $V(T-y_p)$,  $\varphi^*$ is a $T$-stable coloring by Lemma~\ref{stable}.
Moreover,  if $T$ satisfies condition MP (resp. R1 ) under $\varphi$, then it satisfies condition MP (resp. R1) under $\varphi^*$ by Lemma~\ref{MPR1}. 
Assume that $T$ satisfies condition R2 under $\varphi$. Let the corresponding splitting tail of $T$ be $T_n=T_{n,0} \subset T_{n,1}\subset 	\dots \subset T_{n,q}\subset T=T_{n,q+1}$. Since $\phiv^* (f) = \phiv^*(f)$ for every edge $f$ incident to $V(T-y_p)$, $\phibar(v)=\phibar^*(v)$ for every vertex $v\in T-y_p$ and $T_{n,i}\subseteq T-y_p$ for every $0\leq i\leq q$,  the conditions (1), (2), (3) and (4) in Definition~\ref{Defi-R2} are satisfied for $T$ under $\varphi^*$ with the same $\Gamma_h^{i}$ as under $\varphi$ for each $0\leq i\leq q$ and each $\delta_h\in D_{n,i}$. Therefore, $T$ also satisfies condition R2 under $\varphi^*$.
 \qed

\section{An applicable result}\label{4}

An ETT $T$ of a $k$-triple $(G, e, \phiv)$ with ladder $T_0\subset T_1 \subset \dots \subset T_n \subset T$ is called a simple ETT (SETT) if $\gamma_1 = \gamma_2 = \dots = \gamma_n$.   By the definition of companion colors, $\gamma_i\in \phibar_v(T_i) -\phiv_e(T_i)$ for each $1\le i \le n$.  So, we have $\gamma_1\in \phibar_v(T_1) - \phiv_e(T_n)$ if $T$ is an SETT, which in turn shows that  all SETTs satisfy condition R1.   Let $\alpha$ and $\beta$ be two colors and suppose $T$ is $\{\alpha, \beta\}$-closed under $\phiv$. Let $\phiv/(G-T, \alpha, \beta)$ be a coloring obtained from $\phiv$ by interchanging these two colors outside $T$.  Clearly, $\phiv/(G-T, \alpha, \beta)$ is also a $k$-edge-coloring. By Lemma~\ref{stable}, $\phiv/(G-T, \alpha, \beta)$ is $T$-stable. We prove the following result  which is a stronger version of Theorem~\ref{bmain2a-0} in Section 2. 
\begin{THM}\label{bmain2a}
	Let $G$ be a $k$-critical graph with $k \ge \D +1$.  If $G$ is not elementary, then there exist a $k$-triple $(G, e, \phiv)$, a maximum Tashkinov tree $T_1$ and an elementary SETT $T \supset T_1$ satisfying condition MP with the following three properties:
	\begin{eqnarray}
	\phibar_v(T_n) & \subseteq & \phiv_e(T-T_n) \label{beqn-(T-Tn)1}\\
	|T - T_{n}| & \ge & 2 |\phibar_v(T_{n})| +2 \label{beqn-(T-Tn)2}\\
	|T-T_n| & > & 2(1+\frac{\chi'-1-\Delta}{\mu})^{|\phibar_v(T_n)|}\label{beqn-(T-Tn)3}
	\end{eqnarray}
\end{THM}

\proof 
Let $G$ be a non-elementary $k$-critical graph with $k \ge \D +1$.
We first note that every maximum Tashkinov $T$ is closed under any $T$-stable coloring since, otherwise, there would be a larger Tashkinov tree. Moreover,  every Tashkinov tree is an SETT by default.  Based on the above statements, we let $T$ be an SETT satisfying condition MP with ladder $T_1\subset T_2 \subset,...,\subset T_n \subset T$  such that $T$ is closed under every $T$-stable coloring.  We further assume that $m(T)=n$ is maximum.   Let  $\gamma := \gamma_1
=\gamma_2 \dots = \gamma_n$.   Since all SETTs satisfy condition R1,  $T$ satisfies both conditions MP and R1. By Theorem~\ref{bmain},  $T$ is elementary.   Since $G$ is not elementary, $T$ is not strongly closed. So, $T$ has a defective color. 

We first show that (\ref{beqn-(T-Tn)1}) holds. Otherwise, let $\alpha \in  \phibar_v(T_n) - \phiv_e(T- T_n)$ and $\phiv^* = \phiv/(G-T_n, \alpha, \gamma)$. By Lemma~\ref{bLEM:Stable}, $\phiv^*$ is a $T_n\cup\{f_n,b(f_n)\}$-stable coloring. Since $\alpha,\gamma\in\phibar_v(T_n)$, $T$ is still an ETT under $\varphi^*$ with the same set of connecting colors and companion colors under $\varphi$. Therefore $\varphi^*$ is $T$-stable. Since $\alpha\notin\phiv_e(T- T_n)$, $\gamma\notin \phiv^*_e(T-T_n)$, which in turn shows 
$\gamma\notin\varphi^*_e(T)$ because $\gamma\in \phibar_v(T_1) - \phiv_e(T_n)$.  Let $\delta_{n+1}$ be a defective color of $T$.  Since $T$ is elementary, $T$ can not contain both ends of $P_{v(\gamma)}(\delta_{n+1},\gamma,\varphi^*)$. Since $\gamma\in \phibar^*_v(T_1) - \phiv^*_e(T)$,  we can extend $T$ to a larger SETT $T^*$ by adding a connecting edge $f_{n+1}$ which is the first edge in $\partial(T)$ along $P_u(\delta_{n+1},\gamma,\varphi^*)$. Moreover, the resulting SETT satisfies condition MP because we assumed that $T$ is closed among all $T$-stable colorings.  Since stable colorings keep conditions MP and R1 by Lemma~\ref{MPR1}, by taking maximum value of $|T^*|$  with the above properties, we can assume $T^*$ is closed under all $T^*$-stable colorings, which gives a contradiction to the maximality of $m(T)$. 

Recall $T$ is elementary as we mentioned earlier.  To prove  (\ref{beqn-(T-Tn)2}) and (\ref{beqn-(T-Tn)3}), starting from $T_n\cup \{f_n\}$ 
we apply TAA   repeatedly with priority of adding boundary edges with colors being missing on the vertices not in $T_n$ and call such an algorithm  {\it modified TAA} (mTAA).   Clearly, the resulting closed SETT has the same vertex set as $T$.  Assume, without loss of generality, $T$ itself is the resulting tree by applying mTAA till $T$ is maximal to get a closed extension of $T_n\cup \{f_n\}$. For any $\alpha\in \phibar_v(T_n)$, let $T_{\alpha}$ be the maximal segment of $T$ before the last edge with color $\alpha$ being added, i.e, the last element of $T_{\alpha}$ is the vertex before the last edge colored by $\alpha$ in $T$ along $\prec_{\ell}$.
By (\ref{beqn-(T-Tn)1}), $T_{\alpha}$ is a proper subtree of $T$ for each $\alpha\in \phibar_v(T_n)$. Moreover, we have $V(T_{\alpha}-T_n)\neq\emptyset$ for each $\alpha\in \phibar_v(T_n)$ since the last connecting edge $f_n$ is not colored by colors missing in $T_n$.   

We claim that $|T_{\alpha}|$ is odd for each $\alpha\in \phibar_v(T_n)$.  Otherwise, we assume $|T_{\alpha}|$ is even and let $\beta \in \phibar_v(T_{\alpha}-T_n)$. Since $T$ is elementary, $T_{\alpha}$ is also elementary. Since $T_{\alpha}$ is elementary and has even number of vertices, it has a boundary edge  colored by $\beta$ which should be added to $T_{\alpha}$ before the corresponding $\alpha$-edge, a contradiction. By~(\ref{beqn-(T-Tn)1}), each color $\alpha\in\phibar(T_n)$ must be used by an edge in $T-T_{n}$. Since each color $\alpha\in\phibar_v(T_n)$ is used by an edge in $T-T_{n}$ by~(\ref{beqn-(T-Tn)1}) and $|T_{\alpha}|$ is odd for each $\alpha\in \phibar_v(T_n)$, we have $|E(T-T_n)| \ge 2 |\phibar_v(T_n)| +2$ , where additional $2$  follows from $\phiv(f_n) \notin \phibar_v(T_n)$. So, (\ref{beqn-(T-Tn)2}) holds.  

To prove (\ref{beqn-(T-Tn)3}), let $e_1\prec_{\ell}e_2\prec_{\ell}\dots\prec_{\ell}e_q$ be the list of all edges in $T -T_n$ such that $\phiv(e_i)\in \phibar_v(T_n)$ for all $i=1,2, \dots, q$.  Clearly, $q \ge |\phibar_v(T_n)|$. 
For each $1 \le i \le q$,  since $T$ is elementary we have $\varphi(b(e_i)) \supseteq \phibar (T_{e_i} - T_n)$.
On the other hand, according to mTAA, 
$\phiv(\partial(T_{e_i} ) \cap \phibar_v(T_{e_i} - T_n) = \emptyset$.  By eliminating parallel edges,  we can add  at least $|\phibar_v(T_{e_i} - T_n)|/\mu$ neighbors of    
$b(e_i)$ in $V-V(T_{e_i})$ to $T_{e_i}\cup\{b(e_i)\}$ using colors in $\phibar_v(T_{e_i} - T_n)\ne \emptyset$.  Since $|\phibar(v)| \ge \chi' -1 -\D$ for all $v\in V(G)$,   the following inequalities hold:
\[
|T_{e_{i+1}} -T_n| \ge 
1+ |T_{e_i} -T_n|  + \frac{|\phibar_v(T_{e_i} -T_n)|}{\mu} >  
|T_{e_i} -T_n|(1 +  \frac{\chi' -1 - \D}{\mu})
\]
where $T_{e_{q+1}} = T$. Hence, 
\[
|T - T_n| \ge |T_{e_1}-T_n| (1 + \frac{\chi' -1 - \D}{\mu})^q 
\ge |T_{e_1}-T_n| (1 + \frac{\chi' -1 - \D}{\mu})^{|\phibar_v(T_n)|}.
\]
Note that $T_{e_1}$ contains $f_n$ and one more edge with color in
$\phibar(b(f_n))$, we have $|T_{e_1} - T_n| \ge 2$, which in turn gives 
(\ref{beqn-(T-Tn)3}). \qed  

\section{Proof of Theorem~\ref{bmain}}\label{bmainproof}


\setcounter{section}{2}
\setcounter{THM}{4}
\begin{THM}
	Let $T$ be an ETT of a $k$-triple $(G,e,\varphi)$ with $G$ being non-elementary. If $T$ satisfies conditions MP and R1 under $\varphi$, then $T$ is elementary.
\end{THM}	

\setcounter{section}{5}
\setcounter{THM}{0}

	Let $T$ be an ETT of a $k$-triple $(G,e,\varphi)$ with $G$ being non-elementary.  We will prove Theorem~\ref{bmain} inductively on $m(T)$,  the number of rungs. To facilitate our proof, we add the following two  statements  simultaneously for each nonnegative integer $n$.  

\begin{enumerate}
	\item[A.]  If $T$ is an ETT satisfying conditions MP, R1 and R2 with $m(T)=n$, then $T$ is elementary.
	\item[B.]  Suppose statement A holds. If $T$ is an ETT with ladder $T_1\subset T_2 \subset \dots \subset T_n \subset T$ satisfying conditions MP and R1, then there exists a closed ETT $T'$ with $V(T)\subseteq V(T')$ and ladder $T_1 \subset T_2 \subset \dots \subset T_n \subset T'$ satisfying conditions MP, R1 and R2. 
	
\end{enumerate}	
Although statement A appears weaker than Theorem~\ref{bmain}, statement B shows that they are equivalent. By Tashkinov's Theorem, Theorem~\ref{bmain} holds for $n=0$. Assume $n\ge 1$ and all ETTs with $m(T)\le n-1$ satisfying MP and R1 are elementary.  We will show both statements hold for ETTs $T$ with $m(T) = n$, and consequently, they imply all ETTs with $m(T)=n$ satisfying MP and R1 are elementary based on following: Let $T$ be an ETT with $m(T) =n$ satisfying conditions MP and R1. By statement B, there exists an ETT $T'$ with $m(T')=n$ satisfying MP, R1 and R2 such that $V(T') \supseteq V(T)$.  By statement A, $T'$ is elementary, so is $T$. 

The following flowchart depicts the proof strategy. We place the proof of statement B first since it is much shorter than the proof of statement A. 
\begin{figure}[!htb]
	\begin{center}

		\includegraphics[height=3.5cm]{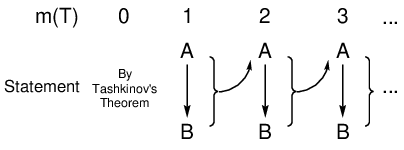}
		
	\end{center}
\end{figure}

\subsection{Proof of statement B}


\proof
Assume that statement A holds for all ETTs with at most $n$ rungs which satisfy conditions MP, R1 and R2.
Let $T$ be an ETT with ladder $T_1 \subset T_2 \subset \dots \subset T_n \subset T$ satisfying conditions MP and R1 of  a $k$-triple $(G,e,\varphi)$.
Starting with $T_n$, we will construct an ETT $T'$ with $V(T)\subseteq V(T')$ by adding  a split tail $T_n=T_{n,0} \subset \dots \subset T_{n,q} \subset T'=T_{n,q+1}$ satisfying conditions MP, R1 and R2. 

We first note a simple fact: under the same coloring $\phiv$,  for any ETT $T^*$ with $T^* \supset T_n\cup\{ f_n\}$, if $T^*$ is closed then $V(T^*) \supseteq V(T)$. Otherwise, let  $e_i$ be the first edge of $T$ crossing the boundary of $T^*$. Then, $\phiv(e_i) \in \phibar_v(T^*)$, giving a contradiction to $T^*$ being closed. 

We define $\Gamma^{0}_{h}=\{\gamma^{n,0}_{h1},\gamma^{n,0}_{h2}\}\subseteq\phibar_v(T_{n,0})$ for all $\delta_h\in D_{n,0}$ as follows.    Recall $D(T) = \{\delta_1, \delta_2, \dots, \delta_n\}$  and $D_{n,0} = D(T) - \phibar_v(T_{n,0})$.  So,  $|D_{n,0}| \le n$. Since $T_1$ is a maximum Tashkinov tree,  $|T_1| \ge 11$.  Because $m(T_n)=n-1$, $T_n$ is elementary by the induction hypothesis. For each $1\leq i\leq n$, since $T_i$ is closed, $|T_i|$ is odd.  Thus $|T_n| \ge 2n$, so $|\phibar_v(T_n)| \ge |T_n| +2 \ge 2|D_{n, 0}|$.  We simply pick $2|D_{n,0}|$ colors from $\phibar_v(T_n)$, divide them into $|D_{n,0}|$ pairs,  assign a distinct pair to each distinct connecting color $\delta_h\in D_{n, 0}$ and name it as $\Gamma^0_h:=\{\gamma^{n,0}_{h1}, \gamma^{n,0}_{h2}\}$. Then condition R2(2) is satisfied.  Let $\Gamma_0 = \cup_{\delta_h\in D_{n,0}}$.

We define  $T_{n,1}$ by the following greedy algorithm. We first let $T_{n,1}=T_{n,0}\cup\{f_n,b(f_n)\}$, where $f_n$ is the connecting edge of $T$ after $T_n$.  If there exists an edge $f\in\partial(T_{n,1})$ such that $\varphi(f)\in\phibar_v(T_{n,1})-\cup_{\delta_h\in D_{n,1}}\Gamma^{0}_h$, we augment $T_{n,1}$ by letting $T_{n,1}:=T_{n,1}\cup\{f,b(f)\}$, and we repeat this process until we can not add any new edge, i.e.,  $T_{n,1}$ is $(\cup_{\delta_h\in D_{n,1}}\Gamma^{0}_h)^-$-closed. Note that condition R2(1) and (4) are satisfied through this algorithm.

Suppose $T_{n,j-1}$ is defined for some $j\geq 2$. If $T_{n,j-1}$ is closed, then $V(T)\subseteq V(T_{n,j-1})$ and we let $T_{n,j-1}=T'$. Suppose $T_{n,j-1}$ is not closed.  Since $T_{n, j-1}$ is $(\cup_{\delta_h\in D_{n,j-1}})\Gamma_h^{j-2})^-$-closed, there exists 
an edge $f'\in\partial(T_{n,j-1})$ such that $\varphi(f')\in\Gamma^{j-2}_{h}$ for some $\delta_h\in D_{n,j-1}$.  Let $\Gamma^{j-1}_{h}$ be a set of two missing colors in $\phibar_v(T_{n,j-1}-T_{n,j-2})$,  and $\Gamma^{j-1}_{h^*}=\Gamma^{j-2}_{h^*}$ for any other  $\delta_{h^*}\in D_{n,j-1}$. By statement A, $T_{n,j-1}$ is elementary. Therefore, $|\phibar_v(T_{n,j-1}-T_{n,j-2})|\geq 2$. So, $\Gamma^{j-1}_{h}$ is well defined. Note that condition R2(2) and (3) are satisfied by our choice.

We define $T_{n, j}$ as follows.  We first let $T_{n,j}=T_{n,j-1}\cup\{f',b(f')\}$,  where $f'$ is defined above.   If there exists $f\in\partial(T_{n,j})$ such that $\varphi(f)\in\phibar_v(T_{n,j})-\cup_{\delta_h\in D_{n,j}}\Gamma^{j-1}_h$, we augment $T_{n,j}$ by letting $T_{n,j}:=T_{n,j}\cup\{f,b(f)\}$, and we repeat this process until we can not add any new edge, i.e.,  $T_{n,j}$ is $(\cup_{\delta_h\in D_{n,j}}\Gamma^{j-1}_h)^-$-closed. Then condition R2(1) and (4) are satisfied through this algorithm.

Clearly, $T_{n,j}$  satisfies condition R2. Since $T_{n}\cup\{f_n,b(f_n)\}\subseteq T$ satisfies conditions MP and R1 under $\varphi$, $T_{n,j}$ also satisfies conditions MP and R1 by Lemma~\ref{bclear}.
Now if $T_{n,j}$ is closed, then $V(T)\subseteq V(T_{n,j})$ and we let $T_{n,j}=T'$. If $T_{n,j}$ is not closed, we will continue to build $T_{n,j+1}$. Eventually we will obtain a closed $T'$ as desired. 

\qed

\subsection{Proof of statement A}

\begin{proof}
	
	Let $T$ be an ETT satisfying conditions MP,  R1 and R2  with the following refinery. 
	\[
	T_1\subset  T_2 \subset 	\dots \subset T_n=T_{n,0} \subset T_{n,1}\subset 	\dots \subset T_{n,q}\subset T=T_{n,q+1}.
	\]
	We prove statement A by induction on $q$ which is the number of splitters.  We will prove the basis step and the inductive step together in the later proof. When $q=0$, we have $T_{n,q}=T_{n,0}=T_n$. Note that we have that $T_{n,0}=T_n$ is elementary because $m(T_n)=n-1<n$.  Therefore we can assume $T_{n,q}$ is elementary and show $T=T_{n,q+1}$ is also elementary. Denote $T$ by $\{T_{n,q},e_0,y_0,e_{1},...,e_p,y_p\}$ following the order $\prec_{\ell}$. We define the path number $p(T)$ of $T$ as the smallest index $i\in\{0,1,...,p\}$ such that the sequence $y_iT:=(y_i,e_{i+1},...,e_p,y_p)$ is a path in $G$.
	Suppose on the contrary that  $T$ is a counterexample to statement A, i.e., $T$ is an ETT satisfying conditions MP R1 and R2 but is not elementary.  Furthermore, we assume that among all counterexamples under $T_{n,q}$-stable colorings when $q\geq 1$ and under $T_n\cup\{f_n,b(f_n)\}$-stable coloring when $q=0$, the following two conditions hold:
	\begin{itemize}
		\item[$(1)$]   $p(T)$ is minimum,
		\item[$(2)$]   $|T-T_{n,q}|$ is minimum subject to $(1)$.
	\end{itemize}
	In the rest of this paper, when we say a coloring is $T_{n,q}$-stable in the proof, we always mean that it is $T_n\cup\{f_n,b(f_n)\}$-stable coloring when $q=0$ for convenience. 
	By our choice, $T_{y_{p-1}}$ is elementary, where $T_{y_{p-1}}=T_{n,q}$ when $p=0$. Since $T$ is not elementary, there exists a color $\alpha\in\phibar(y_p)\cap\phibar(v)$ for some $v\in V(T_{y_{p-1}})$. 
	For simplification of notations, we let $\Gamma^{q}_h=\{\gamma_{h1},\gamma_{h2}\}$ for $\delta_h\in D_{n,q}$. 
	
	\subsubsection{A few properties}
	We first introduce a few concepts that will be used later in the proof. Let $(G, e,\phiv)$ be a $k$-triple, $H$ be an elementary subgraph of $G$ and $P$ be a nonempty sub-chain of an $(\alpha,\beta)$-chain.  If $P$ is a path, $V(P)\cap V(H)=\{u\}$ is an end-vertex  and the other end-vertex of $P$ outside of $H$ has either $\alpha$ or $\beta$ as a missing color, we call the path $P$  an {\it$(\alpha,\beta)$-leg} of $H$ and $u$ an $(\alpha, \beta)$-exit. We denote the $(\alpha,\beta)$-leg $P=P^{ex}_u(\alpha,\beta,\varphi)$, where $u$ is the unique vertex in $V(P)\cap V(H)$. Two colors $\alpha$ and $\beta$ are {\it interchangeable} in $H$ if $H$ has at most one $(\alpha,\beta)$-leg. 
	
	\begin{CLA}\label{bstep1}
		For any $T_{n,j}$ with $0\leq j\leq q$ and two colors $\alpha,\beta$, if $\alpha\in\phibar_v(T_{n,j})$ and is closed in $T_{n,j}$, then $\alpha$ and $\beta$ are interchangeable in $T_{n,j}$.
		
	\end{CLA}	
	\begin{proof}
		We prove Claim~\ref{bstep1} by induction on $j$. First we consider the case when $j=0$. Since $T_n$ is closed, there is no $(\alpha,\beta)$-leg if $\beta\in\phibar_v(T_{n})$.  Hence we assume $\beta\notin\phibar_v(T_{n})$. 
		Since $T_{n,0}$ is elementary and closed, $|\partial_{\beta}(T_{n,j})|$ is odd. Hence $T_{n,j}$ has odd number of $(\alpha,\beta)$-legs. If $|\partial_{\beta}(T_{n,j})|=1$, we are done. Therefore we assume that $|\partial_{\beta}(T_{n,j})|\geq 3$. Let $u,v,w$ be three exits of $(\alpha,\beta)$-legs with $u\prec_l v\prec_l w$. Let $n'$ be the smallest index such that $w\in T_{n'}$. Then	$w\in T_{n'}-T_{n'-1}$.
		
		Let $\gamma\in\phibar(w)$. Note $T_{n}$ is closed for $\gamma$. By Lemma~\ref{bLEM:Stable}, $\varphi^*=\varphi/(G-T_{n},\alpha,\gamma)$ is $T_{n}$-stable, and $T_{n'}$ still satisfies conditions MP and R1 under $\varphi^*$. Moreover, we have that $P^{ex}_w(\gamma,\beta,\varphi^*)=P_w(\gamma,\beta,\varphi^*)=P^{ex}_w(\alpha,\beta,\varphi)$, $P^{ex}_u(\gamma,\beta,\varphi^*)=P^{ex}_u(\alpha,\beta,\varphi)$ and $P^{ex}_v(\gamma,\beta,\varphi^*)=P^{ex}_v(\alpha,\beta,\varphi)$ are three $(\gamma,\beta)$-legs of $T_{n'}$. Let the $w_2$, $u_2$ and $v_2$ be the other end vertices of $P^{ex}_w(\gamma,\beta,\varphi^*),P^{ex}_u(\gamma,\beta,\varphi^*)$ and $P^{ex}_v(\gamma,\beta,\varphi^*)$ not in $T_{n'}$. Let $u'$ be the vertex in $P^{ex}_u(\gamma,\beta,\varphi^*)$ next to $u$, and $f_u$ be the edge connecting $u$ and $u'$ in $P^{ex}_u(\gamma,\beta,\varphi^*)$; and $v'$ be the vertex in $P^{ex}_v(\gamma,\beta,\varphi^*)$ next to $v$, and $f_v$ be the edge  connecting $v$ and $v'$ in $P^{ex}_v(\gamma,\beta,\varphi^*)$. Note that $\varphi^*(f_u)=\varphi^*(f_v)=\beta$. Let $\varphi^2=\varphi^*/P^{ex}_w(\gamma,\beta,\varphi^*)$.
		Since $w\in T_{n'}-T_{n'-1}$ and $P_w(\gamma,\beta,\varphi^*)\cap T_{n'}=w$,  $T_w$ satisfies conditions MP and R1 under $\varphi^2$ by Lemma~\ref{bLEM:Stable}. Note that under $\varphi^2$, $\beta\in\phibar^2(w)$.
		Moreover, $\{T_w,f_u,u',f_v,v'\}$ is an ETT satisfying conditions MP and R1 by Lemma~\ref{bclear} because it is obtained from $T_w$ by TAA under $\varphi^2$. Applying TAA to $\{T_w,f_u,u',f_v,v'\}$ to keep adding edges and vertices until we cannot, we obtain a closed ETT $T_{n'}^2$.  Clearly, $T_{n'}^2$ satisfies conditions MP and R1 by Lemma~\ref{bclear}. Since $T_{n'}^2$ has $n'-1<n$ rungs, $T^2_{n'}$ is elementary. If one of $w_2,u_2,v_2$ is in $T^2_{n'}$, then $\gamma$ must be missing at that vertex since $\beta\in\phibar^2(T^2_{n'})$. Thus both $\gamma,\beta\in\phibar^2(T^2_{n'})$, which in turn shows that all three vertices $w_2,u_2,v_2$ are in $T^2_{n'}$. However, all of them miss either $\gamma$ or $\beta$ under $\varphi^2$ which contradicts the elementary property. 
		Thus none of these three vertices are in $T^2_{n'}$.  Hence each of $P^{ex}_u(\gamma,\beta,\varphi^*),P^{ex}_v(\gamma,\beta,\varphi^*)$ and $P^{ex}_w(\gamma,\beta,\varphi^*)$ contains a $(\gamma,\beta)$-leg of $T^2_{n'}$ under $\varphi^2$. Let $u_1,v_1,w_1$ be the corresponding exits for the $(\gamma,\beta)$-legs contained in above paths respectively. We assume without of generality, $u_1\prec_{\ell} v_1\prec_{\ell} w_1$. We have $w_1\neq w$ since we already have $w\prec_f u'\prec_f v'$ in $T^2_{n'}$. Note that $P^{ex}_{u_1}(\gamma,\beta,\varphi^2)$ and $P^{ex}_{v_1}(\gamma,\beta,\varphi^2)$ are sub-paths of $P^{ex}_u(\alpha,\beta,\varphi)$ and $P^{ex}_v(\alpha,\beta,\varphi)$ and are shorter than those two. Moreover, since $w_1\in T^2_{n'}-T_{n'-1}$, we can continue the proof process again for $T^2_{n'}$ inductively as we did for $T_{n'}$, and finally we will reach a contradiction to the elementary property because we will obtain shorter and shorter legs and finally contain all the ends.
		
		Now we suppose $j>0$ and consider the following two cases. Note that two cases bellow are similar but differ by the number of $(\alpha,\beta)$-legs.
		{\flushleft \bf Case I: $\beta\in\phibar_v(T_{n,j})$.}

		Since $T_{n,j}$ is $\alpha$-closed and, by the induction hypothesis $T_{n,j}$ is elementary, $|V(T_{n,j})|$ is odd. Therefore $|\partial_{\beta}(T_{n,j})|$ is even and there are even number of $(\alpha,\beta)$-legs. If there are none, we are done. Hence we assume that there exist two exit vertices $u,v\in T_{n,j}$, and they belong to legs $P^{ex}_u(\alpha,\beta,\varphi)$ and $P^{ex}_v(\alpha,\beta,\varphi)$, respectively. We may assume $u\preceq_l v$.
		{\flushleft \bf Case I.a: $v\in T_{n,j}-T_{n,j-1}$.	}

		Since $\beta\in\varphi(\partial(T_{n,j}))$ and $\beta\in\phibar_v(T_{n,j})$, i.e $T_{n,j}$ is not closed for $\beta$, we have $v(\beta)\in V(T_{n,j-1})$ by condition R2 (4).
		Let $\gamma\in\phibar(v)$. Then $\gamma\notin\Gamma^{j-1}$ hence $\gamma$ is closed in $T_{n,j}$ by R2. Therefore $T_{n,j}$ is closed for both $\alpha$ and $\gamma$. Hence $\varphi^*=\varphi/(G-T_{n,j},\alpha,\gamma)$ is $T_{n,j}$-stable, and conditions MP, R1 and R2 are still satisfied for $T_{n,j}$ under $\varphi^*$ by Lemma~\ref{bLEM:Stable}. However under $\varphi^*$, $P^{ex}_v(\alpha,\beta,\varphi)=P^{ex}_v(\gamma,\beta,\varphi^*)=P_v(\gamma,\beta,\varphi^*)$ and $P^{ex}_u(\alpha,\beta,\varphi)=P^{ex}_u(\gamma,\beta,\varphi^*)$ are two $(\gamma,\beta)$-legs. Let $\varphi^2=\varphi^*/P^{ex}_v(\gamma,\beta,\varphi^*)$. Then because $P^{ex}_v(\gamma,\beta,\varphi^*)\cap T_v=\{v\}$, $\varphi^2$ is $T_{v}$-stable and $T_{v}$ satisfies conditions MP, R1 and R2 by Lemma~\ref{bLEM:Stable}. 
		However, we have
		$\beta\in\phibar^2(T_{n,j-1})$ and $\beta\in\phibar^2(v)$, where we reach a contradiction to the elementary property of $T_{n,j-1}$ which comes from the minimality of $q$.
		{\flushleft \bf Case I.b: $v\in T_{n,j-1}$.	}

		We claim that there exists a color $\alpha^*\in\phibar_v(T_{n,j-1})$ that is closed in both $T_{n,j-1}$ and $T_{n,j}$.
		First we consider the case when $j=1$. Note that by condition R2(2), $|\cup_{\delta_h\in D_{n,1}}\Gamma^{0}_h|=2|D_{n,1}|\leq 2n$. Since $|\phibar_v(T_1)|\geq 13$ and $T_n$ is elementary with $|T_i|$ being odd for all $i\leq n$, we have $|\phibar_v(T_n)|\geq 11+2n\geq |\cup_{\delta_h\in D_{n,1}}\Gamma^{0}_h|$. By condition R2, $\Gamma^{0}_h\subseteq\phibar_v(T_n)$ for each $\delta_h\in D_{n,0}$. Hence we have $\Gamma^{0}_h\subseteq\phibar_v(T_n)$ for each $\delta_h\in D_{n,1}$ because $D_{n,1}\subseteq D_{n,0}$. Therefore there exists $\alpha^*\in\phibar_v(T_n)-(\cup_{\delta_h\in D_{n,1}}\Gamma^{0}_h)$. Since $T_{n,1}$ is $(\cup_{\delta_h\in D_{n,1}}\Gamma^{0}_h)^-$-closed by condition R2(4) and $T_{n}$ is closed, $\alpha^*$ is closed in both $T_{n,1}$ and $T_{n}$. Now we assume $j>1$. By condition R2(4), $T_{j-1}$ is $(\cup_{\delta_h\in D_{n,j-1}}\Gamma^{j-2}_h)^-$-closed. Similarly as the case $j=1$, we have $|\phibar_v(T_{n,j-2})|\geq 11+2n\geq |\cup_{\delta_h\in D_{n,j-1}}\Gamma^{j-2}_h|$, and there exists $\alpha^*\in\phibar_v(T_{n,j-2})-(\cup_{\delta_h\in D_{n,j-1}}\Gamma^{j-2}_h)$. Hence $\alpha^*$ is closed in $T_{n,j-1}$. By condition R2(3), $\Gamma^{j}-\Gamma^{j-1}\subseteq\phibar_v(T_{n,j}-T_{n,j-1})$, $\alpha^*\notin\Gamma^{j}$. Therefore $\alpha^*\notin (\cup_{\delta_h\in D_{n,j}}\Gamma^{j-1}_h)\subseteq\Gamma^{j}$. Now by condition R2(4), $\alpha^*$ is also closed in $T_{n,j}$, where we have the color $\alpha^*$ as claimed.
		
		Since $\alpha$ is closed in $T_{n,j}$, $\varphi^*=\varphi/(\alpha,\alpha^*,G-T_{n,j})$ is $T_{n,j}$-stable, and $T_{n,j}$ satisfies conditions MP, R1 and R2 under $\varphi^*$ by Lemma~\ref{bLEM:Stable}. Note that $\alpha^*\in\phibar^*_v(T_{n,j-1})$ and $\alpha^*$ is still closed in $T_{n,j-1}$ under $\varphi^*$.  However
		$P^{ex}_u(\alpha^*,\beta,\varphi^*)=P^{ex}_u(\alpha,\beta,\varphi)$ and $P^{ex}_v(\alpha^*,\beta,\varphi^*)=P^{ex}_v(\alpha,\beta,\varphi)$
		are two $(\alpha^*,\beta)$-legs of $T_{n,j-1}$  under $\varphi^*$, giving a contradiction to the induction hypothesis of the minimality of $j$.
		{\flushleft \bf Case II: $\beta\notin\phibar_v(T_{n,j})$}.
		
		In this case $|\partial_{\beta}(T_{n,j})|$ is odd and $\beta\notin\Gamma^{j-1}$. Hence $T_{n,j}$ has odd number of $(\alpha,\beta)$-legs. Let $u,v,w$ be exits from three $(\alpha,\beta)$-legs for $T_{n,j}$ with $u\prec_l v\prec_l w$.
		{\flushleft \bf Case II.a: 	$w\in T_{n,j}-T_{n,j-1}$.}
		
		Let $\gamma\in\phibar(w)$. By definition, $\gamma\notin\Gamma^{j-1}$. Hence $T_{n,j}$ is closed for $\gamma$ by condition R2(4). By Lemma~\ref{bLEM:Stable}, $\varphi^*=\varphi/(G-T_{n,j},\alpha,\gamma)$ is $T_{n,j}$-stable, and conditions MP, R1 and R2 are still satisfied for $T_{n,j}$ under $\varphi^*$. Moreover, under $\varphi^*$, we have $P^{ex}_w(\gamma,\beta,\varphi^*)=P_w(\gamma,\beta,\varphi^*)=P^{ex}_w(\alpha,\beta,\varphi)$, $P^{ex}_u(\gamma,\beta,\varphi^*)=P^{ex}_u(\alpha,\beta,\varphi)$ and $P^{ex}_v(\gamma,\beta,\varphi^*)=P^{ex}_v(\alpha,\beta,\varphi)$ are three $(\gamma,\beta)$-legs for $T_{n,j}$ . Let the $3$ other end vertices of $P^{ex}_w(\gamma,\beta,\varphi^*),P^{ex}_u(\gamma,\beta,\varphi^*)$ and $P^{ex}_v(\gamma,\beta,\varphi^*)$ not in $T_{n,j}$ be $w_2$, $u_2$ and $v_2$ respectively. Let $u'$ be the vertex in $P^{ex}_u(\gamma,\beta,\varphi^*)$ next to $u$, and $f_u$ be the edge connecting $u$ and $u'$ in $P^{ex}_u(\gamma,\beta,\varphi^*)$; and $v'$ be the vertex in $P^{ex}_v(\gamma,\beta,\varphi^*)$ next to $v$, and $f_v$ be the edge connecting $v$ and $v'$ in $P^{ex}_v(\gamma,\beta,\varphi^*)$. Note that $f_v$ and $f_u$ are colored $\beta$ under $\varphi^*$. Let $\varphi^2=\varphi^*/P^{ex}_w(\gamma,\beta,\varphi^*)$.
		Since $w\in T_{n,j}-T_{n,j-1}$ and $P_w(\gamma,\beta,\varphi^*)\cap T_{n,j}=w$, $T_w$ satisfies conditions MP, R1 and R2 by Lemma~\ref{bLEM:Stable}. Note that under $\varphi^2$, $\beta\in\phibar^2(w)$.
		Since $\beta\notin\Gamma^{j-1}$, we have that $\{T_w,f_u,u',f_v,v'\}$ satisfies condition R2. Moreover, it satisfies conditions MP and R1 by Lemma~\ref{bclear}. Based on the definition of condition R2, by Lemma~\ref{bclear} we can keep conditions R1 and MP by keeping extending $\{T_w,f_u,u',f_v,v'\}$ using TAA under condition R2 without using any connecting edges until it is $(\cup_{\delta_h\in D_{n,j}}\Gamma^{j-1}_h)^-$-closed. Let the resulting ETT be $T^2_{n,j}$. Clearly $T^2_{n,j}$ satisfies conditions MP, R1 and R2. By the minimality of $q$, $T^2_{n,j}$ is elementary. If one of $w_2,u_2,v_2$ is in $T^2_{n,j}$, then $\gamma$ must be missing at that vertex since $\beta\in\phibar^2(T^2_{n,j})$. Since both $\gamma,\beta\notin\Gamma^{j-1}$, and both $\gamma,\beta\in\phibar^2(T^2_{n,j})$, we must have all three vertices $w_2,u_2,v_2$ in $T^2_{n,j}$. However, all of them miss either $\gamma$ or $\beta$ under $\varphi^2$, which gives a contradiction to the elementary property. 
		Thus none of the vertices above are in $T^2_{n,j}$. Hence each of $P^{ex}_u(\gamma,\beta,\varphi^*),P^{ex}_v(\gamma,\beta,\varphi^*)$ and $P^{ex}_w(\gamma,\beta,\varphi^*)$ contains a $(\gamma,\beta)$-leg of $T^2_{n,j}$. Let $u_1,v_1,w_1$ be the exits for the $(\gamma,\beta)$-legs contained in the three paths above respectively. We without loss of generality assume $u_1\prec_f v_1\prec_f w_1$. Note that $w_1\neq w$ since we already have $w\prec_f u'\prec_f v'$ in $T^2_{n,j}$. Note that $P^{ex}_{u_1}(\gamma,\beta,\varphi^2)$ and $P^{ex}_{v_1}(\gamma,\beta,\varphi^2)$ are sub-paths of $P^{ex}_u(\alpha,\beta,\varphi)$ and $P^{ex}_v(\alpha,\beta,\varphi)$ and are shorter than those two. Moreover, since $w_1\in T^2_{n,j}-T_{n,j-1}$, we can continue the proof process again for $T^2_{n,j}$ inductively as we did for $T_{n,j}$. Continue in this fashion, we will reach a contradiction because we will obtain shorter and shorter legs until finally all the ends are contained.
		{\flushleft \bf Case II.b: 	$w\notin T_{n,j}-T_{n,j-1}$.}
		
		The proof of this case is essentially the same as in Case I.b. We first show there exists a color which is closed in both $T_{n,j-1}$ and $T_{n,j}$. So there is a $T_{n,j}$-stable coloring $\varphi^*$ in which $T_{n,j}$ satisfies all conditions MP, R1 and R2. However under $\varphi^*$, $\alpha^*$ and $\beta$ are not interchangeable in $T_{n,j-1}$, giving a contradiction to the minimality of $j$. Here we omit the proof.
	\end{proof}

	\begin{CLA}\label{9n}
		For any $y\in V(T_{y_{p-1}})-V(T_{n,q})$, $|\phibar_v(T_y)-\varphi_e(T_y - T_{n,q})|\geq11+2n$.
		Furthermore, if $|\phibar_v(T_y)- \Gamma^{q}\cup D_{n,q} \cup\varphi_e(T_y -  T_{n,q})| \leq 4$, then there exist $7$ distinct connecting colors $\delta_{i}$ with $\delta_{i}\in\phibar_v(T_{y})$ such that all colors $\delta_{i},\gamma_{i1},\gamma_{i2}\notin\varphi_e(T_y - T_{n,q})$.
	\end{CLA}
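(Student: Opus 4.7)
Both parts of the claim reduce to elementary counting, and the plan is as follows.

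For the first inequality, I will use that $T_y$ is a subsequence of $T_{y_{p-1}}$, which is elementary by the minimality assumption in the choice of counterexample; hence $T_y$ itself is elementary and contains $V(e)$. Elementarity gives
\[
|\overline{\varphi}(T_y)| \;=\; \sum_{v \in V(T_y)} |\overline{\varphi}(v)| \;\geq\; |T_y|(k-\Delta)+2 \;\geq\; |T_y|+2.
\]
Since the edges of $T_y - T_{n,q}$ number exactly $|T_y|-|T_{n,q}|$, we have $|\varphi(T_y - T_{n,q})| \leq |T_y|-|T_{n,q}|$, and combining these bounds yields $|\overline{\varphi}(T_y) - \varphi(T_y - T_{n,q})| \geq |T_{n,q}|+2$. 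It then suffices to show $|T_{n,q}| \geq 9+2n$. Each rung $T_i$ is closed and elementary (the latter by the induction hypothesis used in Statement A), so $|T_i|$ is odd, and $T_{i-1} \subsetneq T_i$ forces $|T_i| \geq |T_{i-1}|+2$. Starting from $|T_1|\geq 11$ (Lemma~\ref{COR:T-order-exit-vertex} together with the paper-wide assumption), I obtain $|T_n| \geq 11+2(n-1)=9+2n$, and since $T_{n,q}\supseteq T_n$ the desired bound $11+2n$ follows.

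For the second part, set $A := \overline{\varphi}(T_y) - \varphi(T_y - T_{n,q})$. The first part gives $|A| \geq 11+2n$, and the hypothesis $|A - (\Gamma^{n,q}\cup D_{n,q})| \leq 4$ forces $|A \cap (\Gamma^{n,q}\cup D_{n,q})| \geq 7+2n$. The structural observation driving the proof is that the triples $S_h := \{\delta_h,\gamma_{h1},\gamma_{h2}\}$ indexed by $\delta_h \in D_{n,q}$ partition $\Gamma^{n,q}\cup D_{n,q}$: the sets $\Gamma^{n,q}_h$ are pairwise disjoint by R2(1a), and $D_{n,q}\cap\Gamma^{n,q}=\emptyset$ because $D_{n,q}\subseteq\{\delta_1,\dots,\delta_n\}-\overline{\varphi}(T_{n,q})$ while $\Gamma^{n,q}\subseteq\overline{\varphi}(T_{n,q})$ by R2(1). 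Let $m$ denote the number of indices $h$ with $S_h \subseteq A$; the remaining indices each contribute at most $2$ to $|A\cap(\Gamma^{n,q}\cup D_{n,q})|$, so
\[
7+2n \;\leq\; |A\cap(\Gamma^{n,q}\cup D_{n,q})| \;\leq\; 3m+2(|D_{n,q}|-m) \;=\; m+2|D_{n,q}| \;\leq\; m+2n,
\]
giving $m \geq 7$. Each such $h$ supplies a distinct connecting color $\delta_h\in D_{n,q}$ with $\{\delta_h,\gamma_{h1},\gamma_{h2}\}\cap\varphi(T_y-T_{n,q})=\emptyset$, which is exactly the desired conclusion.

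The only nonroutine input is the rung-size bound $|T_n|\geq 9+2n$, which rests on the parity of closed elementary subgraphs coming from the induction hypothesis on the number of rungs. Everything else is bookkeeping: no color swapping, interchangeability, or fresh appeal to conditions MP or R1 is required, and the pigeonhole step rests only on the disjointness encoded in R2. I therefore anticipate no substantive obstacle in executing this plan.
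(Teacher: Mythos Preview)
Your proof is correct and follows essentially the same approach as the paper: the first inequality is obtained by combining elementarity of $T_y$ with the edge count $|\varphi(T_y-T_{n,q})|\le |T_y|-|T_{n,q}|$ and the bound $|T_{n,q}|\ge 9+2n$, and the second part is the same pigeonhole on the triples $\{\delta_h,\gamma_{h1},\gamma_{h2}\}$. Your presentation makes the partition structure of $\Gamma^{n,q}\cup D_{n,q}$ into disjoint triples more explicit than the paper does, but the argument is the same.
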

	
	\begin{proof}  Since $|\phibar (T_y -T_{n,q})| \ge |\varphi_e(T_y - T_{n,q})|$, $|V(T_{n,q})|\geq11+2(n-1)$ and
		$ |\phibar_v(T_y) - \varphi_e(T_y -T_{n,q}) | \ge |\phibar_v(T_{n,q})|  \ge |V(T_{n,q})| +2 \geq 11+2n$.
		Now assume  $|\phibar_v(T_y)- \Gamma^{q} \cup D_{n,q} \cup\varphi_e(T_y - T_{n,q})| \leq 4$.
		Since $\phibar_v(T_y) = (\phibar_v(T_y) -  \Gamma^{q}\cup D_{n,q} \cup \varphi_e(T_y - T_{n,q})) \cup
		((\Gamma^{q}\cup D_{n,q})\cap\phibar_v(T_y) - \varphi_e(T_y - T_{n,q})\cap\phibar_v(T_y)) \cup (\varphi_e(T_y-T_{n,q})\cap\phibar_v(T_y))$,
		we have
		\[
		|(\Gamma^{q}\cup D_{n,q})\cap\phibar_v(T_y) - \varphi_e(T_y - T_{n,q})\cap\phibar_v(T_y)| \ge |\phibar_v(T_y)| -  4 - |\varphi (T_y - T_{n,q})| \ge 2n + 7.
		\]
		Thus we have
		\[
		|(\Gamma^{q}\cup D_{n,q})\cap\phibar_v(T_y) - \varphi_e(T_y - T_{n,q})| \ge 2n+7.
		\]
		By the Pigeonhole Principle, there are $7$ distinguished $i$ such that  $\delta_i, \gamma_{i1}, \gamma_{i2} \notin \varphi_e(T_y - T_{n,q})$, so the result holds.  	
	\end{proof}
	
	\begin{CLA}\label{bchange}
		Let $\alpha$ and $\beta$ be two missing colors of $V(T_{y_{p-1}})$  with
		$v(\alpha) \prec_{\ell} v(\beta)$.  If $\alpha\notin\varphi_e(T_{v(\beta)} -  T_{n,q})$,
		then $P_{v(\alpha)}(\alpha,\beta,\varphi)=P_{v(\beta)}(\alpha,\beta,\varphi)$ unless at least one of $\alpha,\beta$ is in $D_{n,q}$ and $\alpha\notin\phibar(T_{n,q})$. Additionally, if $\alpha\in \phibar_v(T_{n,q})$ and $\alpha$ is $T_{n,q}$-closed, then
		$P_{v(\alpha)}(\alpha,\beta,\varphi)$ is the only $(\alpha,\beta)$-{\bf path} that may intersect $\partial(T_{n,q})$.
	\end{CLA}
	
	Note that $T_{v(\beta)}-T_{n,q}=\emptyset$ if $v(\beta)\in T_{n,q}$ and in Claim~\ref{bchange},  $(\alpha, \beta)$-path can not be replaced by  $(\alpha, \beta)$-chain because there may be  $(\alpha, \beta)$-cycles intersecting $\partial(T_m)$.
	
	\begin{proof}
		Let $v(\alpha)=u$ and $v(\beta)=w$. We consider the following few cases.
		{\flushleft \bf Case I: 	$u,w\in T_{n,q}$. }

		If $T_{n,q}$ is closed for both $\alpha,\beta$, then $E_{\alpha, \beta}\cap  \partial(T_{n,q}) =\emptyset$ and $P_{u}(\alpha,\beta,\varphi)=P_{w}(\alpha,\beta,\varphi)$ since $T_{n,q}$ is elementary.  So Claim~\ref{bchange} holds. Suppose $T_{n,q}$ is closed for $\alpha$ or $\beta$ but not for both. By Claim~\ref{bstep1} there is at most one $(\alpha,\beta)$-leg in $T_{n,q}$. If $P_{u}(\alpha,\beta,\varphi)\neq P_{w}(\alpha,\beta,\varphi)$, then both paths have one endvertex outside $T_{n,q}$, and therefore there are two $(\alpha,\beta)$-legs in $T_{n,q}$, giving a contradiction to Claim~\ref{bstep1}. If $P_{u}(\alpha,\beta,\varphi)$ is not the unique $(\alpha,\beta)$-path intersecting $\partial(T_{n,q})$, then $T_{n,q}$ has at least two $(\alpha,\beta)$-legs, where we also have a contradiction. Hence $P_{u}(\alpha,\beta,\varphi)$ is the unique $(\alpha,\beta)$-path intersecting $\partial(T_{n,q})$ and  $P_{u}(\alpha,\beta,\varphi)=P_{w}(\alpha,\beta,\varphi)$, where we have as desired. We now assume neither $\alpha$ nor $\beta$ is $T_{n,q}$-closed. Under this assumption, we only need to show that $P_{u}(\alpha,\beta,\varphi)=P_{w}(\alpha,\beta,\varphi)$. We may assume $\beta\in\phibar_v(T_{n,j'}-T_{n,j'-1})$ for some $0\leq j'<q$ where $T_{n,-1}=\emptyset$ for convenience. By condition R2, $\beta$ is closed in $T_{n,j'}$. In the same fashion as we did the case in which $T_{n,q}$ is closed for either $\alpha$ or $\beta$, we have $P_{u}(\alpha,\beta,\varphi)=P_{w}(\alpha,\beta,\varphi)$ in $T_{n,j'}$ because we have $u,v\in T_{n,j'}$.

		{\flushleft {\bf Case II: $u\in T_{n,q}$} and $w\notin T_{n,q}$. }
		
		In this case $\alpha\notin\varphi_e(T_w - T_{n,q})$.   We first consider the case that $\alpha$ is closed in $T_{n,q}$. By Claim~\ref{bstep1}, $T_{n,q}$ has at most one $(\alpha,\beta)$-leg. We also note $P_{u}(\alpha,\beta,\varphi)$ contains at least one $(\alpha,\beta)$-leg, so it is the only $(\alpha,\beta)$-leg intersecting $T_{n,q}$. If $P_{u}(\alpha,\beta,\varphi)\neq P_{w}(\alpha,\beta,\varphi)$, then $P_{w}(\alpha,\beta,\varphi)$ does not intersect $T_{n,q}$. Therefore $\varphi^*=\varphi/P_{w}(\alpha,\beta,\varphi)$ is $T_{n,q}$-stable and $T_{n,q}$ satisfies conditions MP R1 and R2 by Lemma~\ref{bLEM:Stable}. Moreover since $P_{w}(\alpha,\beta,\varphi)$ does not intersect $T_{n,q}$, all colors closed in $T_{n,s}$ under $\varphi$ stay closed under $\varphi^*$ for each $0\leq s\leq q$.  Since $\alpha,\beta\notin\varphi_e(T_{w} -  T_{n,q})-f_n$ where $f_n$ is the last connecting edge, $\varphi^*(f)=\varphi(f)$ for each $f\in E(T_w-T_{n,q})$. Hence by Lemma~\ref{bclear}, $T_w$ satisfies conditions MP and R1. Moreover, since $\varphi^*(f)=\varphi(f)$ for each $f\in E(T_w-T_{n,q})$ and all colors closed in $T_{n,s}$ under $\varphi$ stay closed under $\varphi^*$ for each $0\leq s\leq q$, $T_w$ still satisfies R2 under $\varphi^*$. Since $T_w\subset T$, $p(T_w)\leq p(T)$. However, $T_{w}$ is not elementary under $\varphi^*$, which gives a contradiction to the minimality of $p$ if $p(T_w)= p(T)$, or to the minimality of $p(T)$ if $p(T_w)<p(T)$.
		
		Now we assume that $\alpha$ is not closed in $T_{n,q}$. In this case we only need to prove $P_{u}(\alpha,\beta,\varphi)=P_{w}(\alpha,\beta,\varphi)$. Suppose $P_{u}(\alpha,\beta,\varphi)\neq P_{w}(\alpha,\beta,\varphi)$. Recall that $\alpha\in\phibar_v(T_{n,q})$. If $\alpha\in\phibar_v(T_n)$, it is closed in $T_{n}=T_{n,0}$. If $\alpha\in\phibar_v(T_{n,j}-T_{n,j-1})$ for some $1\leq j\leq q$, then $\alpha\notin\Gamma^{j-1}$, and therefore $\alpha$ is closed in $T_{n,j}$ by condition R2(4). So either way, there exists the largest $q'$ such that $\alpha$ is closed in $T_{n,q'}$.
		Since the only edge in $T-T_n$ with color not missing before is the connecting edge with color $\delta_n$, we have $\beta\notin \phiv_e(T_w-T_{n}-f_n)$. We claim that $\alpha\notin \phiv_e(T_w-T_{n,q'})$. Suppose $\alpha\in\phiv_e(T_w-T_{n,q'})$. We can assume $\alpha\in\phiv_e(T_{n,r}-T_{n,r-1})$ for some $r>q'$. Then $\alpha\notin\cup_{ \delta_h\in D_{n,r}}\Gamma^{r-1}_h$ by condition R2(1), and therefore $\alpha$ is closed in $T_{n,r}$ by condition R2(4), which contradicts the maximality of $q'$. Hence we indeed have $\alpha\notin\phiv_e(T_w-T_{n,q'})$. By Claim~\ref{bstep1} there is at most one $(\alpha,\beta)$-leg in $T_{n,q'}$, which is a sub-path of $P_{u}(\alpha,\beta,\varphi)$. Then $P_{w}(\alpha,\beta,\varphi)$ is disjoint with $T_{n,q'}$. Hence under $\varphi^*=\varphi/P_{w}(\alpha,\beta,\varphi)$, $T_{n,q'}$ satisfies conditions MP, R1 and R2 by Lemma~\ref{bLEM:Stable}. Moreover, $T_w$ satisfies conditions MP and R1 by Lemma~\ref{bclear} since $\alpha\notin\phiv_e(T_w-T_{n,q'})$ and $\beta\notin\phiv_e(T_w-T_n-f_n)$. Now we first consider the case when $q'\geq 1$ or $\beta\neq\delta_n$. In this case $\alpha,\beta\notin\phiv_e(T_w-T_{n,q'})$. Therefore, $\alpha,\beta\notin\phiv_e^*(T_w-T_{n,q'})$, and $\varphi^*$ is $T_{n,q}$-stable, which implies $T_w$ and all $T_{n,s}$ for $q'\leq s\leq q$ satisfies conditions MP, R1, and R2(1), (2) (3) under $\varphi^*$. Note that neither $\alpha$ nor $\beta$ is closed in $T_{n,s}$ for $q'\leq s\leq q$ for $\varphi$, we have $T_w$ and all $T_{n,s}$ for $q'\leq s\leq q$ satisfies condition R2(4) because none of the closed colors become non-closed in $T_{n,s}$ for $q'\leq s\leq q$ under $\varphi^*$. However, $\alpha\in\varphi^*(w)\cap\varphi^*_e(T_{n,q})$, giving a contradiction to the minimality of $p$ if $p(T_w)= p(T)$, or to the minimality of $p(T)$ if $p(T_w)<p(T)$. For the case $q'=0$ and $\beta=\delta_n$, we have that $\beta$ is only assigned to the connecting edge $f_n$ in $T_{n,1}-T_n$  by the construction of $T_{n,1}$ and the assumption of $\beta\in\phibar(w)$ and $w\notin T_{n,q}$. Moreover, by Claim~\ref{bstep1}, $\alpha$ is interchangeable with $\beta$ in $T_{n,0}$, hence there is only one $(\alpha,\beta)$-leg in $T_{n,0}$. Therefore $P_{w}(\alpha,\beta,\varphi)$ is disjoint with $T_{n,0}$, and therefore $\varphi^*(f_n)=\beta$. Note that we can conclude $\varphi^*$ is $T_w$-stable as before, so similarly we have $T_w$ satisfying conditions MP, R1 and R2. We then reach a contradiction since $\alpha\in\phibar^*(w)\cap\phibar^*_v(T_{n,q})$.

		{\flushleft \bf Case III: 	$u,w\notin T_{n,q}$.}
		
		In this case,  we have  $\alpha\notin\varphi_e(T_w -  T_{n,q})$ and $\alpha,\beta \notin D_{n,q}$, which in turn give $\alpha, \beta \notin D_{n}\cup \phibar(T_{w}-w)$. Then $\alpha,\beta\notin\varphi_e(T_w)$.   Suppose on the contrary that $P_{u}(\alpha,\beta,\varphi)\neq P_{w}(\alpha,\beta,\varphi)$. Now consider the proper coloring $\varphi^*=\varphi/P_{w}(\alpha,\beta,\varphi)$. Since  $\alpha, \beta \notin D_{n}\cup \phibar(T_{w}-w)$, all the edges colored by connecting colors and their companion colors stay the same under $\varphi^*$ as $\varphi$ and therefore $T_w$ is an ETT under $\varphi^*$. Moreover, each $T_i$ is still closed under $\varphi^*$ for $1\leq i\leq n$. Since $\alpha\notin\varphi_e(T_{w} -  T_{n,q})$,   $\varphi(f)=\varphi^*(f)$ for every $f\in E(T_w)$ and $\phibar(v)=\phibar^*(v)$ for every $v\in T_w-w$. Thus $T$ is still an ETT under $\varphi^*$. Therefore $\varphi^*$ is $T$-stable and $T$ satisfies conditions MP and R1 by Lemma~\ref{MPR1}. Moreover,  
		$T$ still satisfies condition R2 under $\varphi^*$ because R2 is not related to colors in $\phibar_v(T-T_{n,q})-D_{n,q}$. However, now $\alpha\in\phibar^*(u)\cap\phibar^*(w)$, which gives a contradiction to the minimality of $|T-T_{n,q}|$.
	\end{proof}
	
	\begin{CLA}\label{bstablechange}
		For any two colors $\alpha,\beta\in\phibar_v(T_{y_{p-1}})$, the following two statements hold.
		\begin{enumerate}
			\item	If $\alpha\in\phibar_v(T_{n,q})$ and $P$ is an $(\alpha,\beta)$-path other than $P_{v(\alpha)}(\alpha,\beta,\varphi)$, then $T_{n,q}$ satisfies conditions MP, R1 and R2 under the $T_{n,q}$-stable coloring $\varphi^*=\varphi/P$. Here by condition R2 holds, we mean R2(1) holds for $j<q$, and R2(2), (3) and (4) hold for $j\leq q$.
			\item If $T_{n,q}\prec_{\ell} v(\alpha) \prec_{\ell} v(\beta) \prec_{\ell} y_{p-1}$, $\alpha\notin \phiv_e(T_{v(\beta)})$ and $\alpha, \beta \notin D_{n,q}$, then $\varphi^*=\phiv/P$ is $T$-stable for any $(\alpha, \beta)$-chain $P$. Consequently, $T$ satisfies conditions MP, R1 and R2 under $\varphi^*$.
		\end{enumerate}
	\end{CLA}
	\begin{REM}\label{beasy}
		In part (1) by condition R2 holds, we mean R2(1) holds for $j<q$, and R2(2), (3) and (4) hold for $j\leq q$. Therefore after applying Claim~\ref{bstablechange}, we only need to show that $T-T_{n,q}$ satisfies condition R2(1) for $j=q$ to confirm that $T$ also satisfies condition R2 under $\varphi^*$. Moreover, after applying Claim~\ref{bstablechange} we could always apply Lemma~\ref{bclear} to show that $T$ also satisfies conditions MP and R1 if $T$ is still an ETT obtained from $T_{n,q}$ by TAA under $\varphi^*$.
	\end{REM}
	\begin{proof}
		We first prove (1).
		If one of $\alpha$ and $\beta$ is closed in $T_{n,q}$, we have that $P_{v(\alpha)}(\alpha,\beta,\varphi)$ is the only $(\alpha,\beta)$-path intersecting $T_{n,q}$ by Claim~\ref{bchange}. Therefore $P$ is disjoint with $T_{n,q}$. Then $\varphi^*=\varphi/P$ is a $T_{n,q}$-stable coloring and $T_{n,q}$ satisfies conditions MP, R1, R2 under $\varphi^*$ by Lemma~\ref{bLEM:Stable}. We now suppose that neither $\alpha$ nor $\beta$ is closed in $T_{n,q}$. Then similarly to the proof of Claim~\ref{bchange}, by condition R2(4), there exist the largest $q'$ such that either $\alpha$ or $\beta$ is closed in $T_{n,q'}$.  First we consider the case when $\beta\neq\delta_n$ or $q'>0$. We claim that $\alpha,\beta\notin \phiv_e(T_{n,q}-T_{n,q'})$. The proof of $\alpha\notin \phiv_e(T_{n,q}-T_{n,q'})$ is the same as in Claim~\ref{bchange} Case II, where we assume $\alpha$ is not closed in $T_{n,q}$. Now we prove $\beta\notin\phiv_e(T_{n,q}-T_{n,q'})$. If $\beta\in\phibar_v(T_{n,q})$, we argue just as in the case when $\alpha$ is not closed in $T_{n,q}$. If $\beta\notin\phibar_v(T_{n,q})$, the only possibility that $\beta\in\phiv_e(T_{n,q}-T_{n,q'})$ is $\beta=\delta_n$ and $q'=1$ by the definition of ETT, which does not meet the assumption of this case. Hence $\alpha,\beta\notin \phiv_e(T_{n,q}-T_{n,q'})$.  By Claim~\ref{bstep1} there is at most one $(\alpha,\beta)$-leg in $T_{n,q'}$, which is a sub-path of $P_{v(\alpha)}(\alpha,\beta,\varphi)$. Then $P$ is disjoint with $T_{n,q'}$. Hence $T_{n,q'}$ satisfies conditions MP, R1 and R2 and $\varphi^*$ is $T_{n,q'}$-stable by Lemma~\ref{bLEM:Stable}. Since $\alpha,\beta\notin \phiv_e(T_{n,q}-T_{n,q'})$, $T_{n,q}$ satisfies (1) (2) and (3) of condition R2 under $\varphi^*$ and $\varphi^*$ is $T_{n,q}$-stable. Moreover, $T_{n,q}$ satisfies conditions MP and R1 by Lemma~\ref{MPR1}. Since both $\alpha,\beta$ are not closed in $T_{n,t}$ for $q'\leq t\leq q$, $T_{n,t}$ satisfies condition R2 (4) because none of the closed colors become non-closed in $T_{n,t}$. Thus $T_{n,q}$ itself satisfies condition R2 (4) and we have as desired. For the case $q'=0$ and $\beta=\delta_n$, the only edge $f_n$ colored by $\beta$ is a connecting edge by the construction of $T_{n,1}$, and $\beta\notin \phiv_e(T_{n,q}-T_{n,1}),\alpha\notin \phiv_e(T_{n,q}-T_{n,0})$. Moreover, by Claim~\ref{bstep1}, $\alpha$ is interchangeable with $\beta$ in $T_{n,0}$. Hence there is only one $(\alpha,\beta)$-leg in $T_{n,0}$. Therefore $P$ is disjoint with $T_{n,0}$, and $\varphi^*(f_n)=\beta$. Note that we can conclude $\varphi^*$ is $T_{n,q}$-stable as the case earlier and prove that $T_{n,q}$ satisfies conditions MP, R1 and R2, where we can proceed as earlier to finish the proof.
		
		Now we prove (2). By Claim~\ref{bchange}, $P_{v(\alpha)}(\alpha,\beta,\varphi)=P_{v(\beta)}(\alpha,\beta,\varphi)$. In this case we have $\alpha,\beta\notin\phibar_v(T_{n,q})\cup D_{n}$ and $\alpha,\beta\notin\varphi_e(T_{v(\beta)})$. If $P=P_{v(\alpha)}(\alpha,\beta,\varphi)$, then $T$ is an ETT under $\varphi^*=\varphi/P$ since $\alpha,\beta\notin\varphi^*_e(T_{v(\beta)})$ and $\alpha,\beta\notin\phibar_v(T_{n,q})\cup D_{n}$. Moreover, all the edges colored by connecting colors and their companion colors stay the same under $\varphi^*$ as $\varphi$. Thus $\varphi^*$ is $T$-stable. By Lemma~\ref{MPR1}, $T$ also satisfies condition MP and R1. Since $\alpha,\beta\in\phibar_v(T-T_{n,q})-D_{n,q}$ and condition R2 is not related to colors in $\phibar_v(T-T_{n,q})-D_{n,q}$, $T$ satisfies condition R2 under $\varphi^*$.  If $P\neq P_{v(\alpha)}(\alpha,\beta,\varphi)$, then similarly $\varphi^*$ is $T$-stable and $T$ is an ETT satisfying conditions MP and R1 under $\varphi^*$ by Lemma~\ref{MPR1} because we still have  $\alpha,\beta\notin\varphi^*_e(T_{v(\beta)})$ and $\alpha,\beta\notin\phibar_v(T_{n,q})\cup D_{n}$. Moreover, since $\alpha,\beta\in\phibar_v(T-T_{n,q})-D_{n,q}$, $T$ still satisfies condition R2 under $\varphi^*$.
	\end{proof}
	
	\subsubsection{Case verification}\label{verify}
	\begin{CLA}\label{bcla-p>0}
		$p > 0$
	\end{CLA}
	\proof
	Suppose on the contrary $p=0$, that is, $T=T_{n,q}\cup\{e_0,y_0\}$. We consider two cases.
	
	{\flushleft {\bf Case I:}  $q=0$. In this case $T_{n,q}=T_n$ is closed and $e_0$ is a connecting edge.}		
	
	Let $\alpha\in\phibar_v(T_{n,q})\cap\phibar(y_0)$. Let $k$ be the minimum index such that $\gamma_n \in \phibar_v(T_k)$. By condition R1, $\gamma_n\in\phibar_v(T_k)$ and $\gamma_n\notin\varphi_e(T_n)$.  Let $\varphi^*=\varphi/(G-T_n,\gamma_n,\alpha)$. By Lemma~\ref{bLEM:Stable}, $T$ satisfies conditions MP, R1 and R2 under the $T$-stable coloring $\phiv^*$. Note that $P_{v(\gamma_n)}(\delta_n,\gamma_n,\varphi^*)=P_{y_0}(\delta_n,\gamma_n,\varphi^*)$ by Claim~\ref{bchange} where $v(\gamma_n)$ is the unique vertex in $V(T_k)$ such that $\gamma_n \in \phibar(v(\gamma_n))$. Because $e_0$ is the first edge in $P_{v(\gamma_n)}(\delta_n,\gamma_n,\varphi)\cap\partial(T_n)$, $P_{v(\gamma_n)}(\delta_n,\gamma_n,\varphi^*)$ contains only one edge colored $\delta_n$ in $\partial(T_n)$ under the coloring $\varphi^*$. Hence there is another $(\delta_n,\gamma_n)$-chain $Q$ intersecting  $\partial(T_n)$ under $\varphi^*$. Let $s$ be the smallest index with $s\geq k$ such that $V(Q)\cap V(T_s)\neq\emptyset$. Let $\varphi^2=\varphi^*/Q$. We claim that $\varphi^2$ is a $T_s$-stable coloring. We first consider the case $s>k$. Then $V(Q)\cap V(T_{s-1})=\emptyset$. By Lemma~\ref{bLEM:Stable}, $\varphi^2$ is $T_{s-1}\cup\{f_s,b(f_s)\}$-stable and $T_{s-1}\cup\{f_s,b(f_s)\}$ is still an ETT under $\varphi^2$.  Because $\gamma_n\notin\varphi_e(T_s)$ and $\delta_n\notin\varphi_e(T_s-T_{s-1}-f_{s-1})$ where $f_{s-1}$ is a connecting edge, we have $\varphi^*(f)=\varphi^2(f)$ for all $f$ incident to $V(T_{s-1})$, $\varphi^*(f)=\varphi^2(f)$ for all $f\in E(T_{s-1})$ and $\phibar^*(v)=\phibar^2(v)$ for all $v\in V(T_s)$. Therefore $T_s$ is still an ETT under $\varphi^2$ with the same connecting colors, connecting edges and companion colors as under $\varphi^*$, and $\varphi(f)=\varphi^*(f)$ for every $f$ incident to $V(T_{s-1})$ if $\varphi(f)\in\{\delta_i,\gamma_i\}$ or $\varphi^*(f)\in\{\delta_i,\gamma_i\}$ with $1\leq i\leq s-1$. Hence $\varphi^2$ is $T$-stable. We then assume $s=k$. By condition R1 and the definition of ETT, $\delta_n$ has not been used as a connecting color in $T_s$ and $\gamma_n$ has not been used as a companion color in $T_s$, i.e., $\gamma_i\neq\gamma_n$ and $\delta_i\neq\delta_n$ for $1\leq i< s$. Therefore  we also have that $\varphi(f)=\varphi^*(f)$ for every $f$ incident to $V(T_{s-1})$ if $\varphi(f)\in\{\delta_i,\gamma_i\}$ or $\varphi^*(f)\in\{\delta_i,\gamma_i\}$ with $1\leq i\leq s-1$, and $T_s$ is an ETT under $\varphi^2$ with the same connecting colors, connecting edges and companion colors as under $\varphi^*$. Thus $\varphi^2$ is $T_s$-stable in both cases.  However, under the coloring $\phiv^2$, $T_s$ is no longer closed,    giving a contradiction to MP condition of $T$.
	
	{\flushleft {\bf Case II:}  $q>0$. In this case $T_{n,q}$ is not closed although it is $(\cup_{\delta_h\in D_{n,q}}\Gamma^{q-1}_h)^-$-closed.}	
	
	Assume without loss of generality that $e_0$ is colored by $\gamma_0\in\Gamma^{q-1}$. Let $\alpha\in\phibar_v(T_{n,q})\cap\phibar(y_0)$. Let $u=a(e_0)$ the endvertex of $e_0$ in $T_{n,q}$.  We further assume that $u\in T_{n,q'}-T_{n,q'-1}$ for some $q'\leq q$ where $T_{n,-1}=\emptyset$ for convenience. We claim that $v(\gamma_0)\prec_l u$. Otherwise we can assume $v(\gamma_0)\in T_{n,s}-T_{n,s-1}$ for some $q'\leq s\leq q$, and then $\gamma_0$ is closed in $T_{n,s}$ by condition R2(4). Combining with the assumption $u\prec_f v(\gamma_0)$, we get $y_0\in T_{n,s}$, a contradiction. Hence we have as claimed. Let $\gamma\in\phibar(u)$. Clearly $\alpha\neq\gamma_0$ and $\gamma\neq\gamma_0$ because $\varphi(e_0)=\gamma_0$ and $\alpha,\gamma$ are missing at the endvertices of $e_0$. Since both $v(\alpha)$ and $u$ are in $T_{n,q}$, we have $P_{v(\alpha)}(\alpha,\gamma,\varphi)=P_{u}(\alpha,\gamma,\varphi)$ by Claim~\ref{bchange}, and $P_{y_0}(\alpha,\beta,\varphi)$ is different from the path above. Moreover by Claim~\ref{bstablechange}, $\varphi^*=\varphi/P_{y_0}(\alpha,\gamma,\varphi)$ is $T_{n,q}$-stable and $T_{n,q}$ satisfies conditions MP, R1 and R2. Since $\alpha\neq\gamma_0$ and $\gamma\neq\gamma_0$, $\varphi^*(e_0)=\gamma_0$. Note that $\gamma\in\phibar^*(u)\cap\phibar^*(y_0)$. Let $\varphi^2$ be the coloring obtained from $\varphi^*$ by recoloring $e_0$ by $\gamma$. Hence $\gamma_0\in\phibar^2(u)$. In the case $u\notin T_{n}$, since $\varphi^*$ and $\varphi^2$ are only differed by one edge $e_0$ and both endvertices of $e_0$ are outside of $T_u-u$, $\varphi^2$ is $T_u$-stable and $T_u$ satisfies conditions MP, R1 and R2 by Lemma~\ref{bLEM:Stable}. Since $v(\gamma_0)\prec_l u$, we have $\gamma_0\in\phibar^2(u)\cap\phibar^2(v(\gamma_0))$, giving a contradiction to the minimality of $q$. Now we assume $u\in T_{n}$. Then similarly $\varphi^2$ is $T_u$-stable and $T_u$ satisfies conditions MP and R1 by Lemma~\ref{bLEM:Stable}. Since $m(T_w)\leq n-1$, $T_w$ is elementary under $\varphi^2$ by our assumption. However, we have $\gamma_0\in\phibar^2(u)\cap\phibar^2(v(\gamma_0))$, giving a contradiction.
	\qed
	\begin{case}
		\label{bcase1x} $p(T)=0$.
	\end{case} 
	In this case $T - T_{n,q}$ is a path, so we call $T$ a Generalized Kierstead path.
	\begin{CLA}\label{bclaim9}
		We may assume $\alpha\in\phibar(y_i)\cap\phibar(y_p)$ for some $0\leq i<p$.
	\end{CLA} 	
	
	\proof Suppose  $\alpha\in\phibar(y_p)\cap\phibar(v)$ for some
	$v\in V(T_{n,q})$.  We first consider the case  $\alpha\notin\varphi_e(T -  T_{n,q})$. Let $\beta\in\phibar(y_{p-1})$. By  Claim~\ref{bchange} $P_v(\alpha,\beta,\varphi)=P_{y_{p-1}}(\alpha,\beta,\varphi)$ and  $P_{y_{p}}(\alpha,\beta,\varphi)$ is difference from the path above.  Let $\varphi^* := \varphi/P_{y_p}(\alpha, \beta, \varphi)$.  By Claim~\ref{bstablechange},   $T_{n,q}$ is $\varphi/\varphi^*$-stable and satisfies conditions MP, R1 and R2. Since $\alpha,\beta\notin \varphi_e(T_{y_{p}}-T_{n,q})$, $T$ clearly satisfies condition R2 under $\varphi^*$. By Lemma~\ref{bclear}, $T$ also satisfies MP and R1 under $\varphi^*$.  Note that we have $\beta \in \phibar^*(y_{p-1})\cap \phibar^*(y_p)$,  Claim~\ref{bclaim9} holds.
	
	We now consider the case $\alpha\in\varphi_e(T - T_{n,q})$.
	Following order $\prec_{\ell}$, let $e_j$ be the first edge in $T -T_{n,q}$ such that $\alpha=\varphi(e_j)$. 
	We first consider the case $j\geq 1$. Let $\beta \in \phibar(y_{j-1})$. By Claim~\ref{bchange}, $P_v(\alpha,\beta,\varphi)=P_{y_{j-1}}(\alpha,\beta,\varphi)$ and  $P_{y_{p}}(\alpha,\beta,\varphi)$ is different from the path above. Moreover, by Claim~\ref{bstablechange} $\varphi^*=\varphi/P_{y_{p}}(\alpha,\beta,\varphi)$ is a $T_{n,q}$-stable coloring and $T_{n,q}$ satisfies conditions MP, R1 and R2. Moreover, $T$ is still an ETT obtained from $T_{n,q}$ by TAA under $\varphi^*$ and therefore it satisfies MP and R1 by Lemma~\ref{bclear}. Clearly, under $\varphi^*$, condition R2 holds for $T$ if $\alpha\notin \Gamma^{q}$.  If $\alpha\in \Gamma^{q}$, say  $\alpha=\gamma_{i1}$ for some $0<i\leq n$, by condition R2 we have $\delta_i\in\phibar(w)$ for some $w\preceq_{\ell} y_{j-1}$. Since only edges after $w$ in the order $\prec_{\ell}$ may change colors between $\alpha$ and $\beta$, condition R2 also holds under $\varphi^*$.  Since $\beta\in \phibar^*(y_{j-1})\cap \phibar^*(y_p)$,  Claim~\ref{bclaim9} holds by simply denoting $\varphi^*$ as $\varphi$. From now on when we claim that $T$ satisfies MP, R1 and R2 without checking MP and R1, we either follow Lemma~\ref{bclear}, or the checking of those conditions are trivial, especially after applying Claim~\ref{bchange}.
	
	Now we assume that $j=0$. In this case $q>0$, since $q=0$ implies $\alpha=\delta_n$ which is a contradiction to $\alpha\in\phibar_v(T_{n,q})$. Therefore, $\alpha=\varphi(e_0)$ where $\alpha\in\Gamma^{q-1}$. Note that $\alpha\notin\Gamma^{q}$ by condition R2.  We will show that there exists $ \gamma\in\phibar_v(T_{n,q})-\Gamma^{q}$ such that $\gamma$ is closed in $T_{n,q}$. By condition R2(4), $T_{n,q}$ is $(\cup_{\delta_h\in D_{n,q}}\Gamma^{q-1}_h)^-$-closed. Therefore, $T_{n,q}$ is closed for colors in $\phibar_v(T_{n,q})-\Gamma^{q-1}$ because $\cup_{\delta_h\in D_{n,q}}\Gamma^{q-1}_h\subseteq \Gamma^{q-1}$. Hence we need to show that there exists $\gamma\in\phibar_v(T_{n,q})-\Gamma^{q}\cup\Gamma^{q-1}$. Since $\Gamma^{q}-\Gamma^{q-1}\subseteq \phibar_v(T_{n,q}-T_{n,q-1})$ by condition R2 and the assumption that $T_{n,q-1}$ is elementary, we have $|(\Gamma^{q}\cup\Gamma^{q-1})\cap\phibar_v(T_{n,q-1})|=|\Gamma^{q-1}\cap\phibar_v(T_{n,q-1})|\leq 2n$ and $|\phibar_v(T_{n,q-1})|\geq |\phibar_v(T_1)|+2(n-1)=2n+11$. Therefore $|\phibar_v(T_{n,q})-\Gamma^{q}\cup\Gamma^{q-1}|=|\phibar_v(T_{n,q-1})-\Gamma^{q-1})|+|\phibar_v(T_{n,q}-T_{n,q-1})-(\Gamma^{q}-\Gamma^{q-1})|\geq|\phibar_v(T_{n,q-1})-\Gamma^{q-1}|\geq (2n+11)-2n\geq 11$, where we have $\gamma$ as desired. Now by Claim~\ref{bchange}, $P_{v(\alpha)}(\alpha,\gamma,\varphi)=P_{v(\gamma)}(\alpha,\gamma,\varphi)$, and $P_{y_p}(\alpha,\beta,\varphi)$ is disjoint with $T_{n,q}$. Therefore by Claim~\ref{bstablechange}, $T_{n,q}$ is stable under $\varphi^*=\varphi/P_{y_p}(\alpha,\gamma,\varphi)$ and satisfies conditions MP, R1 and R2. Moreover, since $e_0\in\partial(T_{n,q})$, $e_0\notin P_{y_p}(\alpha,\gamma,\varphi)$. Since $\alpha,\gamma\notin\Gamma^{q}$, $Ty_{p}$ satisfies conditions MP, R1 and R2 under $\varphi^*$. Now $\gamma\in\phibar^*(y_p)\cap\phibar^*(v)$ for some
	$v\in V(T_{n,q})$ and $\alpha\neq\gamma$, which returns to the case either $\gamma\notin\phibar_v(T-T_{n,q})$ or $j\geq 1$.
	\qed
	
	Among all $T$-stable colorings satisfying conditions MP, R1 and R2, we assume that $i$ is the maximum index such that  $\alpha\in\phibar(y_p)\cap\phibar(y_i)$.
	
	\begin{CLA}\label{bp-1}
		$i=p-1$.
	\end{CLA}
	\proof  Suppose on the contrary  $i<p-1$. We first consider the case $\alpha\notin D_{n,q}$.
	Let $\theta\in\phibar(y_{i+1})$. If $\theta\notin D_{n,q}$,  then
	$\{\alpha, \theta\}\cap D_{n,q} = \emptyset$.  By Claim~\ref{bchange}, $P_{y_{i}}(\alpha,\theta,\varphi)=P_{y_{i+1}}(\alpha,\theta,\varphi)$. Let $\varphi^*=\varphi/P_{y_{p}}(\alpha,\beta,\varphi)$. Since both $y_i,y_{i+1}\in T-T_{n,q}$ and $\alpha\notin T_{y_{i+1}}$, by Claim~\ref{bstablechange}, $T$ is also an ETT under $\varphi^*$ and conditions MP, R1 and R2 hold.   But $\theta\in\phibar^*(y_p)\cap\phibar^*(y_{i+1})$, which contradicts the maximality of $i$. We now consider the case $\theta=\delta_k$ for some $k\leq n$.
	By Claim~\ref{bchange}, $P_{v(\gamma_{k1})}(\alpha,\gamma_{k1},\varphi)=P_{y_{i}}(\alpha,\gamma_{k1},\varphi)$ and $P_{y_{p}}(\alpha,\gamma_{k1},\varphi)$ is different from path above.  Note that $P_{y_{p}}(\alpha,\gamma_{k1},\varphi)=y_p$ can occur if $\gamma_{k1}\in\phibar(y_p)$.  By Claim~\ref{bstablechange},  $\varphi^*=\varphi/P_{y_{p}}(\alpha,\gamma_{k1},\varphi)$ is a $T_{n,q}$-stable coloring and $T_{n,q}$ satisfies conditions MP, R1 and R2. Moreover, since $\alpha\notin\varphi_e(Ty_{i+1})$ and
	$\delta_k\in\phibar(y_{i+1})$, $T$ still satisfies conditions MP, R1 and R2 under $\varphi^*$. Then, by Claim~\ref{bchange} again, $P_{v(\gamma_{k1})}(\delta_k,\gamma_{k1},\varphi^*)=P_{y_{i}}(\delta_k,\gamma_{k1},\varphi^*)$ and $P_{y_{p}}(\delta_k,\gamma_{k1},\varphi^*)$ is different from the path above. Let $\varphi^{**}=\varphi^*/P_{y_{p}}(\delta_k,\gamma_{k1},\varphi^*)$.  By Claim~\ref{bstablechange},  $\varphi^{**}$ is $T_{n,q}$-stable and  $T_{n,q}$ satisfies conditions MP, R1 and R2 under $\varphi^{**}$. Note that $\gamma_{k1}\notin\phibar^{**}_v(T_{y_{i+1}}-T_{n,q})$, and $\delta_k$ is only used by connecting edges in $T_{y_{i+1}}$ where they are colored the same under $\varphi^{**}$, we see that $T$ satisfies condition R2. However under $\varphi^{**}$, $\delta_k\in\phibar^{**}(y_p)\cap\phibar^{**}(y_{i+1})$,  giving a contradiction to the maximality of $i$.

	We now consider the case $\alpha\in D_{n,q}$, say $\alpha=\delta_k$ for some $k\leq n$. Since $\varphi(e_{i+1})$ can not be both $\gamma_{k1}$ and $\gamma_{k2}$, we assume without loss of generality  $\varphi(e_{i+1})\neq\gamma_{k1}$.   By Claim~\ref{bchange}, $P_{v(\gamma_{k1})}(\delta_k,\gamma_{k1},\varphi)=P_{y_{i}}(\delta_k,\gamma_{k1},\varphi)$ and $P_{y_{p}}(\delta_k,\gamma_{k1},\varphi)$ is a different path. Let  $\varphi^*=\varphi/P_{y_{p}}(\delta_k,\gamma_{k1},\varphi)$.  By Claim~\ref{bstablechange},  $\varphi^*$ is $T_{n,q}$-stable and $T_{n,q}$ satisfies conditions MP, R1 and R2. Now $\gamma_{k1}\in\phibar^*(y_p)$. Moreover, since $\gamma_{k1}\notin\phibar_v(T_{y_i}-T_{n,q})$ and $\delta_k$ is only used by connecting edges in $T_{y_{i}}$ where they are colored the same under $\varphi^*$, $T$ satisfies conditions MP, R1, R2 and $\varphi^*(e_{i+1})\neq\gamma_{k1}$. Hence $\gamma_{k1}\notin\varphi^*_e(T_{y_{i+1}}-T_{n,q})$. Let $\theta\in\phibar^*(y_{i+1})$. By the minimality of $V(T-T_{n,q})$, $\theta\neq\gamma_{k1}$. By Claim~\ref{bchange}, $P_{v(\gamma_{k1})}(\theta,\gamma_{k1},\varphi^*)=P_{y_{i+1}}(\theta,\gamma_{k1},\varphi^*)$, and $P_{y_{p}}(\theta,\gamma_{k1},\varphi^*)$ is a different path.  By Claim~\ref{bstablechange},  $\varphi^{**}=\varphi^*/P_{y_{p}}(\theta,\gamma_{k1},\varphi^*)$ is $T_{n,q}$-stable and $T_{n,q}$ satisfies conditions MP, R1 and R2. Moreover, since $\theta\in\varphi_e(T_{y_{i+1}})$ implies $\theta=\delta_m$ ($m\leq n$) which is only used by some connecting edges in $T_{y_{i+1}}$ and $\gamma_{k1}\notin\varphi^*_e(T_{y_{i+1}}-T_{n,q})$, $\varphi^{**}$ is $T_{n,q}$-stable ensures $T$ satisfies conditions MP, R1 and R2 under $\varphi^{**}$. However, $\theta\in\phibar^{**}(y_p)\cap\phibar^{**}(y_{i+1})$,  giving a contradiction to the maximality of $i$. \qed
	
	Now we have $i=p-1$. Let $\varphi(e_{p})=\theta$. Since $\alpha\in\phibar(y_p)\cap\phibar(y_{p-1})$, we can recolor $e_{p}$ by $\alpha$. Denote the new coloring by $\varphi^*$. Then $\theta\in\phibar^*(y_{p-1})$, and $\varphi^*$ is $T_{y_{p-1}}$-stable. Moreover, $T_{y_{p-1}}$ satisfies conditions MP, R1 and R2 under $\varphi^*$. Note that $v(\theta)\prec_{\ell} y_{p-1}$, we have a counterexample which has one less vertices than $T$, giving a contradiction. \qed
	
	\begin{case}\label{b2}
		$p(T)=p\geq 1$. In this case, $y_{p-1}$ is not incident to $e_p$. Let $\theta =\varphi(e_p)$.
	\end{case}
	We divide this case into a few subcases. In summary,  we will prove Case 2.1.1 independently.  Case 2.1.2 is redirected to Case 2.1.1 and Case 2.2.1, Case 2.2.1 is redirected to Case 2.1.1, Case 2.2.2 is redirected to Case 2.2.1., which is further redirected to Case 2.1.1. Case 2.3.1 is redirected to Case 2.1, Case 2.3.2 is redirected to Case 2.1.1 and Case 2.2.  Therefore, at the end, there is no loophole in our proof. The next figure describes the above.
	\begin{figure}[!htb]
		\begin{center}

			\includegraphics[height=3.5cm]{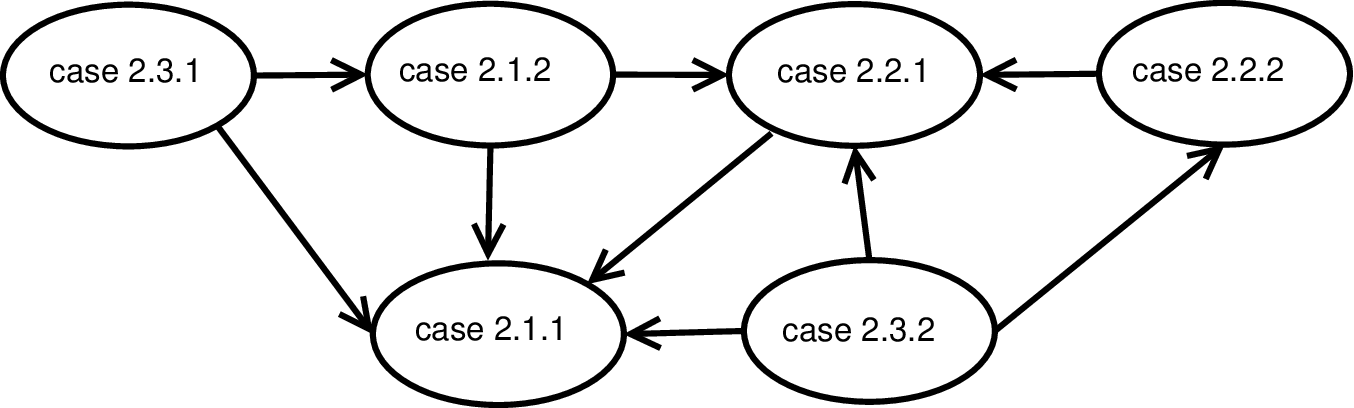}
			
		\end{center}
	\end{figure}	
	\begin{subcase}\label{bcase611}
		$\alpha\in\phibar(y_p)\cap\phibar(y_{p-1})$ and $\alpha\in D_{n,q}$. 
	\end{subcase}
	Assume $\alpha=\delta_{m}$ for some $m\leq n$.
	Since $\delta_{m}\in\phibar(y_{p})$, we have $\theta\neq\delta_{m}$. Note that $\theta\in D_{n,q}$ may occur.
	\begin{subsubcase}\label{bcase611a}
		$\theta\notin\phibar(y_{p-1})$.
	\end{subsubcase}
	We first consider the case $\theta\notin\Gamma^{q}$.
	By condition R2, $\gamma_{m1},\gamma_{m2}\notin \varphi_e(T_{y_{p-1}} -  T_{n,q})$.
	Thus $\gamma_{m1},\gamma_{m2}\notin\varphi_e(T - T_{n,q})$.
	If $\gamma_{m1}\in\phibar(y_{p})$, then $\gamma_{m1}$ is missing twice in
	the ETT $T^*=(T_{n,q},\ e_0,\ y_0,\ e_{1},...,\ e_{p-2},\allowbreak y_{p-2},\ e_p,\ y_p)$ when $p\geq 2$. Here $T^*$ gives a counterexample smaller than $T$. If $p=1$, we must have $q>0$. Otherwise since $T_{n,0}$ is closed for colors in $\phibar_v(T_{n,0})$, we have $\theta\in\phibar(y_{p-1})$, giving a contradiction. We then consider $(T_{n,q},e_1,y_1)$ as a smaller counterlexample. Note that $(T_{n,q},e_1,y_1)$ still satisfies conditions MP, R1 and R2 while dropping $e_0$, as $\varphi(e_1)\in\cup_{\delta_h\in D_{n,q}}\Gamma^{q-1}_h$ and $\varphi(e_1)=\theta\notin\Gamma^{q}$. By Claim \ref{bchange}, $P_{v(\gamma_{m1})}(\delta_m,\gamma_{m1},\varphi)=P_{y_{p-1}}(\delta_m,\gamma_{m1},\varphi)$. Now we consider  $T_{y_p,y_{p-1}}=(T_{n,q},y_0,e_1,...,y_{p-2},e_p,y_p, \allowbreak e_{p-1},y_{p-1})$ obtained from $T$ by switching the order of joining vertices $y_{p}$ and $y_{p-1}$. We can see $T_{y_p,y_{p-1}}$ is also an ETT of $(G,e,\varphi)$ since $\theta \notin\phibar(y_{p-1})$ and $\theta\notin\Gamma^{q}$ and conditions MP, R1, R2 are satisfied.  Applying Claim \ref{bchange} again,  we have $P_{v(\gamma_{m_1})}(\delta_m,\gamma_{m1},\varphi)=P_{y_{p-1}}(\delta_m,\gamma_{m1},\varphi)$, giving a contradiction.
	
	Now we assume $\theta\in\Gamma^{q}$. Without loss of generality we say $\theta=\gamma_{k1}$ for some $k\leq n$. By Claim~\ref{bchange}, $P_{v(\gamma_{m1})}(\delta_m,\gamma_{m1},\varphi)=P_{y_{p-1}}(\delta_m,\gamma_{m1},\varphi)$. If $\delta_k\notin\phibar(y_{p-1})$, $T_{y_p,y_{p-1}}$ also satisfies MP, R1 and R2, where we proceed in the same fashion as the previous case and consider $T_{y_p,y_{p-1}}$. Since $\gamma_{k1}$ can not be both $\gamma_{m1}$ and $\gamma_{m2}$, we assume $\delta_k\in\phibar(y_{p-1})$ and $\gamma_{m2}\neq\gamma_{k1}$. By condition R2,  we have $\gamma_{m2}\notin\varphi_e(T - T_{n,q})$. By
	Claim~\ref{bchange}, $P_{v(\gamma_{m2})}(\delta_m,\gamma_{m2},\varphi)=P_{y_{p-1}}(\delta_m,\gamma_{m2},\varphi)$ and that $P_{y_{p}}(\delta_m,\gamma_{m2},\varphi)$ is different from the path above. Note that $\gamma_{m2}\in\phibar_v(T_{n,q})$, by Claim~\ref{bstablechange}, $T_{n,q}$ is an ETT satisfying conditions MP, R1 and R2 under the $T_{n,q}$-stable coloring $\varphi^*=\varphi/P_{y_{p}}(\delta_m,\gamma_{m2},\varphi)$.   Moreover, $T$ under $\varphi^*$ satisfies conditions MP, R1 and R2 since $\gamma_{m2}\notin\varphi_e(T - T_{n,q})$.  Furthermore, the combination of $\delta_m\in\phibar(y_{p-1})$ and $\theta=\gamma_{k1}$ implies $\delta_m$ is only assigned to connecting edges in $T$.  Therefore, $\gamma_{m2}\notin \varphi^*_e(T -  T_{n,q})$ because $\gamma_{m2}\notin \varphi_e(T -  T_{n,q})$ and no $\delta_m$ edge in $T$ is recolored under $\varphi^*$ . By Claim~\ref{9n},
	$|\phibar^*_v(T_{y_{p-1}})-\varphi^*_e(T_{y_{p-1}} - T_{n,q}))|\geq11+2n$.  Hence there exists $\beta\in\phibar^*(V(T_{y_{p-2}})) - \Gamma^{q}$ such that $\beta\notin\varphi^*_e(T -  T_{n,q})$.
	By Claim~\ref{bchange}, $P_{v(\gamma_{m2})}(\beta,\gamma_{m2},\varphi^*)=P_{v(\beta)}(\beta,\gamma_{m2},\varphi^*)$, and $P_{y_{p}}(\beta,\gamma_{m2},\varphi^*)$ is a different path than above. Let $\varphi^{**}=\varphi^*/P_{y_{p}}(\beta,\gamma_{m2},\varphi^*)$.  Applying Claim~\ref{bstablechange},  we see that  the coloring $\varphi^{**}$ is $T_{n,q}$-stable and $T_{n,q}$ satisfies conditions MP, R1, R2. Moreover, since $\gamma_{m2},\beta\notin\varphi^*_e(T -  T_{n,q})$, $\gamma_{m2},\beta\notin\varphi^{**}_e(T -  T_{n,q})$. Hence $T$ satisfies conditions MP, R1 and R2 under $\varphi^{**}$. Now $\beta\in\phibar^{**}(y_p)$. By
	Claim~\ref{bchange}, $P_{v(\gamma_{k1})}(\beta,\gamma_{k1},\varphi^{**})=P_{v(\beta)}(\beta,\gamma_{k1},\varphi^{**})$, and $P_{y_{p}}(\beta,\gamma_{k1},\varphi^{**})$ is a different path than above. Finally, we let $\varphi^{***}=\varphi^{**}/P_{y_{p}}(\beta,\gamma_{k1},\varphi^{**})$.  Since $\beta\notin\varphi^{**}_e(T -  T_{n,q})$ and by Claim~\ref{bstablechange}, we can see that $T$ satisfies conditions MP, R1 and R2 under $\varphi^{***}$ . However, under $\varphi^{***}$ we have $\varphi^{***}(e_p)=\beta$, $\gamma_{k1}\in\phibar^{***}(y_{p})$ and $v(\beta)\prec_{\ell} y_{p-1}$. Hence
	$(T_{n,q},y_0,e_{1},...,e_{p-2},y_{p-2},e_p,y_p)$ is a counterexample smaller than $T$, giving a contradiction.

	\begin{subsubcase}\label{bcase611b}
		$\theta \in\phibar(y_{p-1})$. In this case $\theta\neq\delta_m$ and $\theta\notin\phibar_v(T_{n,q})$.
	\end{subsubcase}
	Note that $\delta_m$ is only assigned to connecting edges in $T$. By Claim~\ref{bchange}, $P_{v(\gamma_{m1})}(\delta_m,\gamma_{m1},\varphi)=P_{y_{p-1}}(\delta_m,\gamma_{m1},\varphi)$ and $P_{y_{p}}(\delta_m,\gamma_{m1},\varphi)$ is a different path from above. Let $\varphi^*=\varphi/P_{y_{p}}(\delta_m,\gamma_{m1},\varphi)$.  By Claim~\ref{bstablechange},  in $T_{n,q}$-stable coloring $\varphi^*$,  $T_{n,q}$  is an ETT satisfies conditions MP, R1 and R2. Moreover, $\gamma_{m1}\notin\varphi_e(T-T_{n,q})$ and $\delta_m$ is only used by connecting edges in $T$ ensures $T$ satisfies condition R2 under $\varphi^*$. Since $\gamma_{m1}\in\phibar^*_v(T_{n,q})$, by applying Claim~\ref{bchange} again, we  have $P_{v(\gamma_{m1})}(\theta,\gamma_{m1},\varphi^*)=P_{y_{p-1}}(\theta,\gamma_{m1},\varphi^*)$ and $P_{y_{p}}(\theta,\gamma_{m1},\varphi^*)$ is a different path than above. Let $\varphi^{**}=\varphi^*/P_{y_{p}}(\theta,\gamma_{m1},\varphi^*)$.  Now since $\gamma_{m1}\in\phibar^{**}_v(T_{n,q})$, by applying Claim~\ref{bstablechange},  we see that under the $T_{n,q}$-stable coloring $\varphi^{**}$,  $T_{n,q}$ is an ETT  and  satisfies conditions MP, R1 and R2. Moreover, $T$ satisfies conditions MP, R1 and R2 under $\varphi^{**}$ since $\gamma_{m1}\notin\varphi_e(Ty_{p-1}-T_{n,q})$ and $\theta$ may only be used by connecting colors in $T$ if $\theta=\delta_k$ for some $k\leq n$.  Note that under $\varphi^{**}$, 	$\theta\in\phibar^{**}(y_p)\cap\phibar^{**}(y_{p-1})$ and $\varphi^{**}(e_p)=\gamma_{m1}$. If $\theta\in D_{n,q}$, then under $\varphi^{**}$ we have Case~\ref{bcase611a}. So we may assume $\theta\notin D_{n,q}$, which will be handled in case a of Case \ref{bcase612a} below.

	\begin{subcase}\label{bcase612}
		$\alpha\in \phibar(y_p)\cap \phibar(y_{p-1})$ and $\alpha\notin D_{n,q}$.
	\end{subcase}
	\begin{subsubcase}\label{bcase612a}
		$\theta  \notin\phibar(y_{p-1})$.
	\end{subsubcase}
	In this case, $T_{y_p, y_{p-1}} := (T_{n,q}, y_0, e_1, \dots, y_{p-2}, e_p, y_p, e_{p-1}, y_{p-1})$ is also an ETT under $\varphi$ satisfies conditions MP, R1 and R2 except for the case where $\theta\in\Gamma^{q}_m$ for some $m\leq n$ and $\delta_m\in\phibar(y_{p-1})$.
	We first assume there does not exist $1\leq m\leq n$ such that $\theta\in\Gamma^{q}_m$ and $\delta_m\in\phibar(y_{p-1})$.
	By Claim~\ref{9n},  we have $|\phibar_v(Ty_{p-1})-\varphi(E(Ty_{p-1}-T_{n,q}))|\geq 2n+11$.  So there exists  a color $\beta\in\phibar_v(Ty_{p-2}) -  D_{n,q}$ such that $\beta\notin\varphi_e(T - T_{n,q})$. We claim that $\beta\notin\phibar(y_{p})$. Otherwise, $(T_{n,q},y_0,e_{1},...,$ $e_{p-2},$ $y_{p-2},e_p,y_p)$ is a counterexample smaller than $T$, giving a contradiction.   Since $\alpha,\beta\notin D_{n,q}$, by Claim~\ref{bchange} $P_{v(\beta)}(\alpha,\beta,\varphi)=P_{y_{p-1}}(\alpha,\beta,\varphi)$.  Applying  Claim~\ref{bchange} to $T_{y_p, y_{p-1}}$,  we see that $P_{v(\beta)}(\alpha,\beta,\varphi)=P_{y_{p}}(\alpha,\beta,\varphi)$.  So, $P_{v(\beta)}(\alpha,\beta,\varphi)$ has three endvertices $v(\beta)$, $y_{p-1}$ and $y_p$,    a contradiction.  Hence,  we may assume  $\theta=\gamma_{m1}$ and $\delta_{m}\in\phibar(y_{p-1})$ for some $m$ with $ 1\le m\leq n$, which in turn gives  $\gamma_{m2},\alpha\notin\varphi_e(T - T_{n,q})$ and $\delta_m$ is only used by connecting edges in $T$.
	
	By Claim~\ref{bchange}, 	$P_{v(\gamma_{m2})}(\alpha,\gamma_{m2},\varphi)=P_{y_{p-1}}(\alpha,\gamma_{m2},\varphi)$ and $P_{y_{p}}(\alpha,\gamma_{m2},\varphi)$ is a different path from above.  Let $\varphi^*=\varphi/P_{y_{p}}(\alpha,\gamma_{m2},\varphi)$. Since $\gamma_{m2}\in\phibar_v(T_{n,q})$, by applying Claim~\ref{bstablechange},  we see that, under $\varphi^*$,  $T_{n,q}$ is an ETT and satisfies conditions
	MP, R1 and R2. Moreover $T$ satisfies condition MP, R1 and R2 under $\varphi^*$ because $\gamma_{m2},\alpha\notin\varphi_e(T - T_{n,q})$.  If $\delta_m\in\phibar^*(y_{p})$, then with $\delta_m\in\phibar^*(y_p)\cap\phibar^*(y_{p-1})$ and $\varphi^*(e_p)=\gamma_{m1}\notin\phibar^*(y_{p-1})$, where we have Case~\ref{bcase611a}. Hence  $\delta_m\notin\phibar^*(y_{p})$. Now by Claim \ref{bchange}, $P_{v(\gamma_{m_2})}(\delta_m,\gamma_{m2},\varphi^*)=P_{y_{p-1}}(\delta_m,\gamma_{m2},\varphi^*)$, and $P_{y_{p}}(\delta_m,\gamma_{m2},\varphi^*)$ is different from the path above. Let $\varphi^{**}=\varphi^*/P_{y_{p}}(\delta_m,\gamma_{m2},\varphi^*)$.  Since $\gamma_{m2}\in\phibar^*_v(T_{n,q})$, by applying Claim~\ref{bstablechange}, we see that under the $T_{n,q}$-stable coloring $\varphi^{**}$,  $T_{n,q}$ is an ETT satisfies conditions MP, R1 and R2.  Additionally,  $T$ satisfies
	condition MP, R1 and R2  since $\gamma_{m2}\notin\varphi^*_e(T - T_{n,q})$ and $\delta_{m}$ is only assigned to connecting edges in $T$. In $\varphi^{**}$, we have $\delta_m\in\phibar^{**}(y_p)\cap\phibar^{**}(y_{p-1})$, $\gamma_{m1}=\varphi^{**}(e_p)\notin\phibar^{**}(y_{p-1})$, which also leads us back to Case \ref{bcase611a}.
	
	\begin{subsubcase}\label{bcase612b}
		$\theta\in\phibar(y_{p-1})$. In this case,  $\alpha\notin\varphi_e(T - T_{n,q})$.
	\end{subsubcase}
	We first assume  $\theta=\delta_{m}$ for some $m\leq n$.
	By condition R2,  $\gamma_{m1}\notin\varphi_e(T -  T_{n,q})$. By Claim~\ref{bchange}, $P_{v(\gamma_{m1})}(\alpha,\gamma_{m1},\varphi)=P_{y_{p-1}}(\alpha,\gamma_{m1},\varphi)$, and $P_{y_{p}}(\alpha,\gamma_{m1},\varphi)$ is a path different from above. Let $\varphi^*=\varphi/P_{y_{p}}(\alpha,\gamma_{m1},\varphi)$.  Since $\gamma_{m1}\in\phibar_v(T_{n,q})$, we see that
	$\varphi^*$ is $T_{n,q}$-stable and $T_{n,q}$ satisfies conditions MP, R1, R2 under $\varphi^*$ by Claim~\ref{bstablechange}. Moreover $T$ satisfies condition MP, R1 and R2 since $\alpha,\gamma_{m1}\notin\varphi_e(T - T_{n,q})$. Note that $\gamma_{m1}\notin\varphi^*_e(T - T_{n,q})$.  By Claim~\ref{bchange} again, $P_{v(\gamma_{m1})}(\delta_{m1},\gamma_{m1},\varphi^*)=P_{y_{p-1}}(\delta_{m1},\gamma_{m1},\varphi^*)$, and $P_{y_{p}}(\delta_{m1},\gamma_{m1},\varphi^*)$ is a different path from above. Let $\varphi^{**}=\varphi^*/P_{y_{p}}(\delta_{m1},\gamma_{m1},\varphi^*)$. Since $\gamma_{m1}\in\phibar_v(T_{n,q})$, by applying Claim~\ref{bstablechange}, $T_{n,q}$ is an ETT satisfying conditions MP, R1 and R2 under the $T_{n,q}$-stable coloring $\varphi^{**}$. Since $\gamma_{m1}\notin\varphi^*_e(T - T_{n,q})$, $T$ satisfies conditions MP, R1 and R2 under $\varphi^{**}$. Note that under $\varphi^{**}$, we have $\delta_m\in\phibar^{**}(y_p)\cap\phibar^{**}(y_{p-1})$, $\gamma_{m1}=\varphi^{**}(e_p)\notin\phibar^{**}(y_{p-1})$, which is Case~\ref{bcase611a}.  So, we assume $\theta\notin D_{n,q}$.
	
	By Claim \ref{9n}, there exists a color  $\beta\in\phibar_v(T_{y_{p-2}}) -  D_{n,q}$ such that $\beta\notin\varphi_e(T - T_{n,q})$ satisfying either $\beta\notin\Gamma^{q}$ or there exists an $r\in[n]$ with $\beta=\gamma_{r1}\in\Gamma^{q}$ and $\delta_{r}\in\phibar_v(T_{y_{p-2}})$. By Claim~\ref{bchange}, $P_{v(\beta)}(\alpha,\beta,\varphi)=P_{y_{p-1}}(\alpha,\beta,\varphi)$ and $P_{y_{p}}(\alpha,\beta,\varphi)$ is a different path from above. Let $\varphi^*=\varphi/P_{y_{p}}(\alpha,\beta,\varphi)$.  Applying Claim~\ref{bstablechange},  we see that under the $T_{n,q}$-stable coloring $\varphi^*$, $T_{n,q}$ is an ETT satisfying conditions MP, R1 and R2. Moreover $T$ satisfies conditions MP, R1 and R2 under $\varphi^*$ since $\alpha,\beta\notin\varphi_e(T - T_{n,q})$.  In $\varphi^*$, we have $\beta\in\phibar^*(y_p)\cap\phibar^*(v(\beta))$ and $v(\beta)\neq y_{p-1}$. Note that $\theta\notin\varphi^*_e(T_{y_{p-1}} - T_{n,q})$ and $\beta\notin\varphi^*_e(T - T_{n,q})$. Since $\beta,\theta\notin D_{n,q}$, by Claim~\ref{bchange}, $P_{v(\beta)}(\theta,\beta,\varphi^*)=P_{y_{p-1}}(\theta,\beta,\varphi^*)$ and $P_{y_{p}}(\theta,\beta,\varphi^*)$ is different path other than above. Let
	$\varphi^{**}=\varphi/P_{y_{p}}(\theta,\beta,\varphi^*)$.  Applying Claim~\ref{bstablechange} again, we see that under $\varphi^{**}$,  $T_{n,q}$ is an ETT satisfies conditions MP, R1 and R2. Now we check R2 for $T$. Since $\theta\notin\varphi^*_e(T_{y_{p-1}} - T_{n,q})$ and $\beta\notin\varphi^*_e(T - T_{n,q})$, we have R2 being satisfied if $\beta\notin\Gamma^{q}$. For the case when $\beta=\gamma_{r1}\in\Gamma^{q}$, we have $\delta_{r}\in\phibar_v(Ty_{p-2})$ which in turn gives condition R2. Finally by Lemma~\ref{bclear}, $T$ satisfies MP and R1 under $\varphi^{**)}$. However, we have $\theta\in\phibar^{**}(y_p)\cap\phibar^{**}(y_{p-1})$ and $\varphi^{**}(e_p)=\beta\notin\phibar^{**}(y_{p-1})$, which goes back to Case \ref{bcase612a}.

	\begin{subcase}
		$\alpha\in\phibar(y_p)\cap\phibar(v)$ for a vertex  $v\prec_{\ell} y_{p-1}$.
	\end{subcase}

	\begin{CLA}\label{balpha} We may assume
		$\alpha\notin\varphi_e(T - T_{n,q})$ such that  either $\alpha\notin D_{n,q}\cup\Gamma^{q}$ or there exists $k\in[n]$ with
		$\alpha=\delta_k\in\phibar_v(T)$ and $\gamma_{k1},\gamma_{k2}\notin \varphi_e(T - T_{n,q})$.
	\end{CLA}
	
	\proof By Claim~\ref{9n}, we have  $|\phibar_v(T_{y_{p-2}} -D_{n,q}\cup \Gamma^{q}\cup \varphi_e(T_{y_{p-2}} - T_{n,q})| \ge 4$ or there exists index $k$ such that all $\delta_k, \gamma_{k1}$ and $\gamma_{k2} \in \phibar_v(T_{y_{p-2}}) -\varphi_e(T- T_{n,q})$.  The first inequality imples that there exists a color $\beta \in \phibar_v(T_{p-2}) - D_{n,q}\cup \Gamma^{q}\cup \varphi_e(T-T_{n,q})$.  If the second case happens, we take $\beta = \delta_k$.  If $\beta\in\phibar(y_p)$, we are done. Hence we assume $\beta\notin\phibar(y_p)$. Let $P:=P_{y_p}(\alpha, \beta, \varphi)$.  We will show one of the following two statement holds.
	\begin{itemize}
		\item[P:] \label{bStatA} In coloring $\varphi^* = \varphi/P$, $T$ is an ETT and satisfies conditions MP, R1 and R2.
		\item[Q:] \label{bStatB} In $\varphi$, there exists a non-elementary ETT $T'$ with the same ladder as $T$ and $q$ splitters where $T_{n,q}\subseteq T'$ such that conditions MP, R1 and R2 are satisfied, but $p(T') < p(T)$.
	\end{itemize}
	
	Note that statement P gives Claim~\ref{balpha} while statement Q gives a contradiction. Note that $\beta\notin \Gamma^{q}$ by the choice of $\beta$ in Claim~\ref{9n}. We proceed with the proof by considering three cases:  $\alpha \notin \Gamma^{q}$, $\alpha \in \Gamma^{q} - \varphi_e(T-T_{n,q})$ and $\alpha \in \Gamma^{q}\cap \varphi_e(T-T_{n,q})$.
	
	If $V(P)\cap V(T_{y_{p-1}}) = \emptyset$, by Claim~\ref{bstablechange}, we can show that under $\varphi^*=\varphi/P$, $T$ is an ETT and satisfies conditions MP, R1 and R2, so statement P holds. Hence we assume $V(P)\cap V(T_{y_{p-1}})\neq \emptyset$. Along the order of $P$ from $y_p$ , let $u$ be the first vertex in $V(T_{y_{p-1}})$ and $P'$ be the subpath joining $u$ and $y_p$. Let
	\begin{eqnarray*}
		T' & = & T_{y_{p-2}}\cup P' \quad \mbox{ if $u\ne y_{p-1}$, and }\\
		T' & = & T_{y_{p-1}} \cup P' \quad \mbox{ if $u=y_{p-1}$. }
	\end{eqnarray*}
	Note that $e_0\notin T'$ may happen when $q>0$, but it is easy to see that $T'$ is still an ETT with the same ladder as $T$ and $q$ splitters where $T_{n,q}\subseteq T'$.
	{\flushleft \bf Case I: $\alpha\notin \Gamma^{q}$.}	
	Since $\alpha, \beta \in \phibar_v(T_{y_{p-2}})$ and $\alpha\notin \Gamma^{q}$, $T'$ is an ETT satisfying conditions MP, R1 and R2. Hence statement Q holds and gives a contradiction to the minimality of $p(T)$.
	
	
	{\flushleft \bf Case II: $\alpha\in \Gamma^{q}\cap \varphi_e(T-T_{n,q})$.}  Assume $\alpha = \gamma_{m1}$ for some $m \le n$. Since $\varphi(e_p)\neq\alpha$, $\alpha\in\varphi_e(T_{y_{p-1}}-T_{n,q})$. Therefore we must have  $\delta_{m}\in\phibar_v(T_{y_{p-2}})$ by condition R2.  Furthermore, $\beta\in\phibar_v(Ty_{p-2})$. Therefore $T'$ is an ETT and satisfies conditions MP, R1 and R2. Hence statement Q holds.
	
	{\flushleft \bf Case III: $\alpha \in \Gamma^{q} - \varphi_e(T-T_{n,q})$.} Let $\varphi^*=\varphi/P$. By Claim~\ref{bchange}, $P$ is a path different from $P_{v(\alpha)}(\alpha,\beta,\varphi)=P_{v(\beta)}(\alpha,\beta,\varphi)$. Hence $\varphi^*$ is $T_{n,q}$-stable and conditions MP, R1, R2 are satisfied for $T_{n,q}$ by Claim~\ref{bstablechange}. Note that in this case $\varphi(f)=\varphi^*(f)$ for every edge $f$ in $E(T-T_{n,q})$. Therefore $T$ is an ETT satisfying conditions MP, R1 and R2 in coloring $\varphi^*$. Hence statement P holds.  \qed

	Now let $\varphi$ and $\alpha$ be  as the claim above. We then consider two cases.
	\begin{subsubcase}
		$\theta=\varphi(e_p)\notin\phibar(y_{p-1})$.
	\end{subsubcase}
	Let $T'=(T_{n,q},e_0,y_0,e_1,y_1,...,e_{p-2}, y_{p-2},e_p,y_p)$. In this case, $T'$ is an ETT satisfies conditions MP and R1.  Note that $T'$ also satisfies condition R2 with the exception $\theta=\gamma_{m1}$  and $\delta_{m}\in\phibar(y_{p-1})$ for some $m\leq n$, which gives a contradiction to the minimality of $p(T)$. Hence we may assume $\theta=\gamma_{m1}$  and $\delta_{m}\in\phibar(y_{p-1})$ for some $m\leq n$.  By condition R2,  we have  $\gamma_{m1}\notin\varphi_e(T_{y_{p-1}} - T_{n,q})$.
	By Claim~\ref{bchange}, $P_{v} (\alpha, \gamma_{m1}, \varphi) = P_{v(\gamma_{m1})}(\alpha, \gamma_{m1}, \varphi)$ and $P_{y_p}(\alpha, \gamma_{m1}, \varphi)$ is different from the path above.
	Let $\varphi^* =\varphi/P_{y_p}(\alpha, \gamma_{m1}, \varphi)$.
	By Claim~\ref{bstablechange}, $T_{n,q}$ is an ETT satisfying conditions MP, R1 and R2 under the $T_{n,q}$-stable coloring $\varphi^*$ .
	Since $\alpha \notin \varphi_e(T -T_{n,q})$, for every edge $f\in E(T-T_{n,q})$ we have $\varphi^*(f) = \gamma_{m1}$ only if $\varphi(f) = \gamma_{m1}$.  Therefore under $\varphi^*$, $T$ is an ETT satisfying conditions MP, R1 and R2.    Note $\gamma_{m1} \in \phibar^*(y_p)\cap \phibar^*(v(\gamma_{m1}))$.  Since $\delta_m\in \phibar(y_{p-1})$, we have $\delta_m\notin \varphi_e(T-T_{n,q})$.  Let $\varphi^{**} = \varphi^*/P_{y_p}(\delta_m, \gamma_{m1}, \varphi^*)$.  Applying Claim~\ref{bchange} and Claim~\ref{bstablechange}, we can show as before that under $\varphi^{**}$, $T$ is an ETT and satisfies conditions MP, R1, R2, and $\delta_m \in \phibar^{**}(y_p)\cap \phibar^{**}(y_{p-1})$. So,   under $\varphi^{**}$ we go back to  Case \ref{bcase611}.
	
	\begin{subsubcase}
		$\theta=\varphi(e_p)\in\phibar(y_{p-1})$.
	\end{subsubcase}
	We first assume $\theta=\varphi(e_p)=\delta_{m}$ for some $m \leq n$.
	By condition R2,  $\gamma_{m1}\notin\varphi_e(T -  T_{n,q})$. By Claim \ref{bchange}, $P_{v(\gamma_{m1})}(\alpha,\gamma_{m1},\varphi)=P_{v}(\alpha,\gamma_{m1},\varphi)$ and $P_{y_{p}}(\alpha,\gamma_{m1},\varphi)$ is a different path from above. Let $\varphi^*=\varphi/P_{y_{p}}(\alpha,\gamma_{m1},\varphi)$.  By Claim~\ref{bstablechange}, under the $T_{n,q}$-coloring $\varphi^*$, $T_{n,q}$ is an ETT satisfying conditions MP, R1 and R2.
	Since $\alpha, \gamma_{m1}\notin\varphi_e(T -  T_{n,q})$, $T$ is an ETT satisfying MP, R1 and R2 under $\varphi^*$. Now $\gamma_{m1}\in\phibar^*(y_p)$ and $\delta_m, \gamma_{m1} \notin \varphi^*_e(T-T_{n,q})$.  Similarly, by applying Claim~\ref{bchange} and Claim~\ref{bstablechange}, we can show that under the coloring   $\varphi^{**}=\varphi^*/P_{y_{p}}(\delta_m,\gamma_{m1},\varphi^*)$, $T$ is also an ETT satisfying conditions MP, R1 and R2. Now $\delta_m\in\phibar^{**}(y_p)\cap\phibar^{**}(y_{p-1})$, which is dealt in Case \ref{bcase611a}.
	
	We now consider the case $\theta=\varphi(e_p)\notin D_{n,q}$.
	Since $\theta \in \phibar(y_{p-1})$ and $T_{y_{p-1}}$ is elementary, we have $\theta\notin \Gamma^{q}$, so  $\theta\notin D_{n,q}\cup\Gamma^{q}$.
	Suppose $\alpha\neq D_{n,q}$.   Then, $\alpha\neq D_{n,q}\cup\Gamma^{q}$ by Claim~\ref{balpha}. By Claim~\ref{bchange}, $P_{v(\alpha)}(\alpha,\theta,\varphi)=P_{y_{p-1}}(\alpha,\theta,\varphi)$ and $P_{y_p}(\alpha, \theta, \varphi)$ is a different path than the one above. Let $\varphi^*=\varphi/P_{y_{p}}(\alpha,\theta,\varphi)$.  By Claim~\ref{bstablechange}, under $\varphi^*$, $T_{n,q}$ is an ETT satisfying conditions MP, R1 and R2.  Since $\theta,\alpha\notin\varphi_e(T_{y_{p-1}} -  T_{n,q})$ and $\alpha,\theta\notin D_{n,q}\cup\Gamma^{q}$, $T$ is an ETT satisfying conditions MP, R1 and R2 under $\varphi^*$.  Now $\theta\in\phibar^*(y_p)\cap\phibar^*(y_{p-1})$, which is dealt in Case~\ref{bcase611}. Hence we may assume $\alpha=\delta_m\in D_{n,q}$ for some $m\leq n$. By Claim~\ref{balpha}, we have $\gamma_{m1},\gamma_{m2},\delta_m\notin\varphi_e(T - T_{n,q})$. By Claim~\ref{bchange}, $P_{v(\gamma_{m1})}(\alpha,\gamma_{m1},\varphi)=P_{v}(\alpha,\gamma_{m1},\varphi)$ and $P_{y_{p}}(\alpha,\gamma_{m1},\varphi)$ is different from the path above. Let $\varphi^*=\varphi/P_{y_{p}}(\alpha,\gamma_{m1},\varphi)$. Since $\gamma_{m1},\delta_m\notin\varphi_e(T-T_{n,q})$, by Claim~\ref{bstablechange}, $T_{n,q}$ is an ETT satisfying conditions MP, R1 and R2 under $\varphi^*$. Moreover, since $\gamma_{m1},\delta_m\notin\varphi_e(T-T_{n,q})$, $T$ satisfies conditions MP, R1 and R2 under $\varphi^*$. Note  $\gamma_{m1}\notin\varphi^*_e(T - T_{n,q})$. Hence by Claim~\ref{bchange}, $P_{v(\gamma_{m1})}(\theta,\gamma_{m1},\varphi^*)=P_{y_{p-1}}(\theta,\gamma_{m1},\varphi^*)$ and $P_{y_{p}}(\theta,\gamma_{m1},\varphi^*)$ is a different path than above.  Let $\varphi^{**}=\varphi^*/P_{y_{p}}(\theta,\gamma_{m1},\varphi^*)$.
	Again by Claim~\ref{bstablechange},  $T_{n,q}$ is an ETT satisfying conditions MP, R1 and R2 under $\varphi^*$.  Note that from $\varphi^*$ to $\varphi^{**}$, in $E(T -T_{n,q})$, $e_p$ is the only edge changed color from $\theta$ to $\gamma_{m1}$. Since $\delta_m=\alpha\in \varphi^{**}(v)$,   $T$  also an ETT satisfying conditions MP, R1 and R2 under $\varphi^{**}$. Now $\theta\in \phibar^{**}(y_p)\cap \phibar^{**}(y_{p-1})$, which is dealt in Case \ref{bcase612}. This completes Case~\ref{b2}.
	\qed


	In the remainder of the proof, let $I_{\varphi}=\{i\geq0:\phibar(y_p)\cap\phibar(y_{i})\ne \emptyset \}$ and let $j= p(T)$. Clearly $I_{\varphi} = \emptyset$ when $\{ v:\phibar(y_p)\cap \phibar(v) \ne \emptyset\} \subseteq V(T_{n,q})$.  For convention,  we denote $\max(I_{\varphi}) = -1$ when $I_{\varphi} = \emptyset$. By the assumption of $p(T)$,   we have  $j\geq 1$ and $y_{j-1}$ is not incident to $e_j$.

	\begin{case}\label{bcase4}
		$p(T)\leq p-1$.
	\end{case}
	Firstly note that we can assume  $max(I_{\varphi})< p(T)$.
	This is because the case $max(I_{\varphi})\geq p(T)$ is similar to Case \ref{bcase1x} and can be handled in the same fashion:  We first show that $\max(I_{\varphi}) = p-1$  and replace color $\varphi(e_p)$ by $\alpha$ to get a smaller counterexample. Here we omit the details.

	Let $j=p(T)$. Then $j\geq 1$ and $e_j\notin E_G(y_{j-1},y_j)$. Let  $min(I_{\varphi})=i$ if $I_{\varphi}\neq\emptyset$. We let $y_{j-2}$ be the last vertex in $T_{n,q}$ if $j=1$, and in this case $T_{y_{j-2}}=T_{n,q}.$
	
	\begin{CLA}\label{bj-1}
		We may assume there exist $\alpha\in\phibar(y_p)\cap\phibar_v(T_{y_{j-2}})$ such that either $\alpha\notin\Gamma^q$, or	$\alpha=\gamma_{m1}\in\Gamma^{q}$  with $\delta_m\in D_{n,q}$ and $v(\delta_m)\preceq_{\ell} y_{j-2}$.
	\end{CLA}
	
	\proof We first consider the case when $I_{\varphi}\neq\emptyset$. Since we assume $\max(I_{\varphi})<j$, $i\leq j-1$. If $i<j-1$, then $j-2\geq 0$, and we have $\alpha\in\phibar(y_p)\cap\phibar_v(T_{y_{j-2}})$ with $\alpha\notin\Gamma^{q}$ because $\Gamma^q\subseteq\phibar_v(T_{n,q})$ and there is a color in $\phibar(y_{p})\cap \phibar_v(T_{y_{j-2}}-T_{n,q})$.
	Hence we assume $i=j-1$. Thus we have a color $\alpha \in \phibar(y_i)\cap \phibar(y_p)$, and therefore $\alpha \notin \phibar_v(T_{n,q})$ and $\alpha \notin \Gamma^{q}$. By Claim~\ref{9n}, there exists a color $\beta\in\phibar_v(T_{y_{j-2}})$ such that  $\beta\notin\varphi_e(T_{y_{j-1}}-T_{n,q})$ and either $\beta\notin D_{n,q}\cup\Gamma^{q}$ or $\beta=\delta_k\in D_{n,q}$ with $\gamma_{k1},\gamma_{k2},\delta_k\notin\varphi_e(T_{y_{j-1}} -T_{n,q})$.
	
	We now consider the case $\alpha=\delta_{m}\in D_{n,q}$ for some $m\leq n$.
	By condition R2,  $\gamma_{m1}\notin\varphi_e(T_{y_i} - T_{n,q}))$.
	By Claim \ref{bchange},  $P_{v(\gamma_{m1})} (\delta_m, \gamma_{m1}, \varphi)=P_{y_i}(\delta_m, \gamma_{m1}, \varphi)$ and $P_{y_p}(\delta_m, \gamma_{m1}, \varphi)$ is a different path. Let $\varphi^*=$ $\varphi/P_{y_p}(\delta_m,$ $ \gamma_{m_1},\varphi)$.  By Claim~\ref{bstablechange}, under the $T_{n,q}$-stable coloring $\varphi^*$, $T_{n,q}$ satisfies conditions MP, R1 and R2.  Since $\gamma_{m1}\notin \varphi^*_e(T_{y_i}-T_{n,q})$ and $\delta_m$ is only assigned to connecting edges in $T_{y_i}$, $T$ is an ETT satisfying conditions MP, R1 and R2 under $\varphi^*$.  Note that $\gamma_{m1}\notin \varphi^*_e(T_{y_{j-1}} - T_{n,q})$. We have $\gamma_{m1}\notin \varphi^*_e(T_{v(\beta)} - T_{n,q})$. Moreover, $\beta\notin\varphi^*_e(T_{y_{j-1}}-T_{n,q})$. By Claim~\ref{bchange}, $P_{v(\gamma_{m1})} (\beta, \gamma_{m1}, \varphi^*) = P_{v(\beta)} (\beta, \gamma_{m1}, \varphi^*)$ and $P_{y_p}(\beta, \gamma_{m1}, \varphi^*)$ is a different path.  Let  $\varphi^{**}=\varphi^*/P_{y_p}(\gamma_{m_1},\beta,\varphi^*)$.
	By Claim~\ref{bstablechange}, $T_{n,q}$ satisfies conditions MP, R1 and R2 under the $T_{n,q}$-stable coloring $\varphi^{**}$. Since $\beta, \gamma_{m1} \notin \varphi^*_e(T_{y_{j-1}} -T_{n,q})$, we have $\beta, \gamma_{m1} \notin \varphi^{**}_e(T_{y_{j-1}} -T_{n,q})$. Thus, under $\varphi^{**}$, $T$ is an ETT satisfying conditions MP, R1 and R2.   Now $\beta\in\phibar^{**}(y_p)\cap \phibar^{**}(v(\beta))$,  So Claim~\ref{bj-1} holds.

	We now consider the case $\alpha \notin D_{n,q}$. Recall that $\alpha\notin\Gamma^{q}$. Since  $\alpha \in \phibar(y_{j-1})\cap \phibar(y_p)$, $\alpha\notin\varphi_e(T_{y_{j-1}}-T_{n,q})$. We first assume $\beta\notin D_{n,q}\cup\Gamma^{q}$. Since $\alpha, \beta \notin \varphi_e(T_{y_{j-1}} - T_{n,q})$,  by Claim~\ref{bchange}, $P_{v(\beta)} (\alpha, \beta, \varphi) =
	P_{y_i}(\alpha, \beta, \varphi)$ and $P_{y_p}(\alpha, \beta, \varphi)$ is a different path.  By Claim~\ref{bstablechange},   under $\varphi^*=\varphi/P_{y_p}(\alpha,\beta,\varphi)$, $T_{n,q}$ satisfies conditions MP, R1 and R2. For any edge $f\in E(T_{y_{j-1}})$, $\varphi(f) = \varphi^*(f)$ since $\alpha,\beta\notin\varphi_e(T_{y_{j-1}}-T_{n,q})$. Since $\alpha,\beta\in\phibar^*_v(T_{y_{j-1}})$, $T$ is still an ETT satisfying conditions MP and R1 under $\varphi^*$ by Lemma~\ref{bclear}. Since both $\alpha, \beta\notin D_{n,q}\cup \Gamma^{q}$, $T$ also satisfies condition R2 under $\varphi^*$.  It is seen that, under $\varphi^*$, Claim~\ref{bj-1} holds.
	
	We now assume $\beta=\delta_m\in D_{n,q}$ for some $m\leq n$.  By our choice of $\beta$, we have
	$\delta_m, \gamma_{m1}, \gamma_{m2}\notin \varphi_e(T_{y_i} - T_{n,q})$.    By Claim~\ref{bchange},
	$P_{v(\gamma_{m1})} (\alpha, \gamma_{m1}, \varphi) = P_{y_i} (\alpha, \gamma_{m1}, \varphi)$ and $P_{y_p} (\alpha, \gamma_{m1}, \varphi)$ a different path. Let $\varphi^*=\varphi/P_{y_p}(\alpha, \gamma_{m1},\varphi)$. By Claim~\ref{bstablechange}, $\varphi^*$ is $T_{n,q}$-stable and $T_{n,q}$ satisfies
	conditions MP, R1 and R2 under $\varphi^*$.  Since $\alpha, \gamma_{m1} \notin \varphi_e(T_{y_i} - T_{n,q})$, we have
	$\alpha, \gamma_{m1} \notin \varphi^*_e(T_{y_i} - T_{n,q})$.  So, as an ETT under $\varphi^*$, $T_{n,q}$ can be extended to $T$ and condition R2 holds.  
	Now we have as claimed under $\varphi^{*}$ because $\beta=\delta_m\in\phibar^*_v(T_{y_{j-2}})$.
	
	We then consider the case $I_{\varphi}=\emptyset$. If $\alpha\notin\Gamma^{q}$, we are done. Hence we assume $\alpha=\gamma_{m1}\in\Gamma^{q}$ for some $m\leq n$. We first assume that $\delta_m\notin\phibar_v(T_{y_{p-1}})$. Then $\gamma_{m1}\notin\varphi_e(T-T_{n,q})$. By Claim~\ref{9n}, there exists a color $\beta\in\phibar_v(T_{y_{p-2}})$ such that  $\beta\notin\varphi_e(T_{y_{p}} - T_{n,q})$ and either $\beta\notin D_{n,q}\cup\Gamma^{q}$ or $\beta=\delta_k\in D_{n,q}$ with $\gamma_{k1},\gamma_{k2},\delta_{k}\notin\varphi_e(T_{y_{p}} -T_{n,q})$. By Claim~\ref{bchange},
	$P_{v(\gamma_{m1})} (\beta, \gamma_{m1}, \varphi) = P_{v(\beta)} (\beta, \gamma_{m1}, \varphi)$ and $P_{y_p} (\beta, \gamma_{m1}, \varphi)$ is a different path. By Claim~\ref{bstablechange}, we have that $T_{n,q}$ is an ETT satisfying conditions MP, R1, R2 under the $T_{n,q}$-stable coloring $\varphi^*=\varphi/P_{y_p}(\beta, \gamma_{m1},\varphi)$. Since $\beta, \gamma_{m1} \notin \varphi_e(T_{y_p} - T_{n,q})$, we have
	$\beta, \gamma_{m1} \notin \varphi^*_e(T_{y_p} - T_{n,q})$.  So, as an ETT under $\varphi^*$, $T_{n,q}$ can be extended to an ETT $T$ which satisfies conditions MP, R1 and R2. Note that under $\varphi^*$ we have Claim~\ref{bj-1} if $v(\beta)\prec_{\ell} y_{j-2}$. If $y_j \prec_{\ell} v(\beta)$, we have $max(I_{\varphi})\geq p(T)$. If $v(\beta)=y_{j-1}$, we have the case $I_{\varphi}\neq\emptyset$, where we can proceed as before.
	
	We now assume that $\delta_m\in\phibar_v(T_{y_{p-1}})$. Since $\delta_m\in D_{n,q}$,  $\delta_m\notin\phibar_v(T_{n,q})$. Without loss of generality, we assume that $\delta_{m}\in\phibar(y_k)$ for some $k\leq p-1$. If $k< j-1$, we have Claim~\ref{bj-1}, hence we assume $k\geq j-1$. By condition R2, $\gamma_m\notin\varphi_e(T_{y_k}-T_{n,q})$.  By Claim~\ref{bchange},
	$P_{v(\gamma_{m1})} (\delta_m, \gamma_{m1}, \varphi) = P_{y_k} (\delta_m, \gamma_{m1}, \varphi)$ and $P_{y_p} (\delta_m, \gamma_{m1}, \varphi)$ a different path. By Claim~\ref{bstablechange},
	under the $T_{n,q}$-stable coloring $\varphi^*:=\varphi/P_{y_p}(\delta_m, \gamma_{m1},\varphi)$,  $T_{n,q}$ satisfies
	conditions MP, R1 and R2. Since $\gamma_{m1},\delta_m\notin\varphi_e(T_{y_{k}}-T_{n,q})$, $T$ satisfies conditions MP, R1, and R2 under $\varphi^*$ because the edges of $T$ which are colored different under $\varphi^*$ and $\varphi$ is in $T_{y_{p}}-T_{y_{k}}$, and they are colored by $\gamma_{m1}$ or $\delta_m$ in both colorings $\varphi$ and $\varphi^*$. If $k>j-1$, we have $max(I_{\varphi})\geq p(T)$.  If $k=j-1$, we have the case $I_{\varphi}\neq\emptyset$, where we can proceed as before.
	\qed

	We assume $\varphi$ satisfies Claim~\ref{bj-1}.
	By Claim~\ref{9n},  there exists  a color $\beta\in\phibar_v(T_{y_{j-2}})$ with
	$\beta\notin\varphi_e(T_{y_{j}} -  T_{n,q})$ such that  either $\beta\notin D_{n,q}\cup\Gamma^{q}$ or $\beta=\delta_k\in D_{n,q}$  with $\delta_k,\gamma_{k1},\gamma_{k2}\notin\varphi_e(T_{y_{j}}  -  T_{n,q})$ for some $k\leq n$. Now we consider the path $P:=P_{y_{p}}(\alpha,\beta,\varphi)$. First we consider the case $V(P)\cap V(T_{y_{j-1}})\neq\emptyset$. Along the order of $P$ from $y_p$ , let $u$ be the first vertex in $V(T_{y_{j-1}})$ and $P'$ be the subpath joining $u$ and $y_p$. Let
	\begin{eqnarray*}
		T' & = & T_{y_{j-2}}\cup P' \quad \mbox{ if $u\ne y_{j-1}$, and }\\
		T' & = & T_{y_{j-1}} \cup P' \quad \mbox{ if $u=y_{j-1}$. }
	\end{eqnarray*}
	Again note that $e_0\notin T'$ may happen when $q>0$, but it is easy to see that $T'$ is still an ETT with the same ladder as $T$ and $q$ splitters where $T_{n,q}\subseteq T'$.
	{\flushleft \bf Case I: $\alpha\notin \Gamma^{q}$.}	
	Since $\alpha, \beta \in \phibar_v(T_{y_{j-2}})$, $T'$ is an ETT satisfying conditions MP, R1 and R2, giving a contradiction to the minimality of $p(T)$.
	
	
	{\flushleft \bf Case II: $\alpha\in \Gamma^{q}$.} Then by Claim~\ref{bj-1}, $\alpha=\gamma_{m1}\in\Gamma^{q}$ for some $m\leq n$ and $v(\delta_m)\prec_{\ell} y_{j-2}$. Then $\delta_{m}\in\phibar_v(T_{y_{j-2}})$.  Furthermore, $\beta\in\phibar_v(T_{y_{j-2}})$. Therefore $T'$ is an ETT and satisfies conditions MP, R1 and R2, giving a contradiction to the minimality of $p(T)$. 	
	

	Therefore we have $V(P)\cap V(T_{y_{j-1}})=\emptyset$. Let $\varphi^*=\varphi/P$. Then $\varphi^*$ is $T_{y_{j-1}}$-stable and $T_{y_{j-1}}$ satisfies conditions MP, R1 and R2 by Lemma~\ref{bLEM:Stable}. If $\alpha\notin\Gamma^{q}$, $T$ satisfies conditions MP, R1 and R2 under $\varphi^*$ since $\alpha,\beta\in\phibar_v(T_{y_{j-2}})$ and $\beta\notin\Gamma^q$. If $\alpha\in\Gamma^{q}$, by Claim~\ref{bj-1},  $\alpha=\gamma_{m1}\in\Gamma^{q}$ for some $m\leq n$ and $v(\delta_m)\prec_{\ell} y_{j-2}$. Therefore $T$ satisfies conditions MP, R1 and R2  since $\beta,\delta_m\in\phibar_v(T_{y_{j-2}})$ and $\beta\notin\Gamma^q$.  Note that $\beta\notin\varphi^*_e(T_{y_j}-T_{n,q})$ and $\beta\in\phibar^*(y_p)\cap\phibar^*(v(\beta))$, where $v(\beta)\prec_l y_{j-2}$. Denote $v=v(\beta)$ for convenience.
	Let $\gamma\in\phibar(y_j)$. Then $\gamma\notin\Gamma^{q}$ and $\gamma\notin\varphi^*(T_{y_j}-T_{n,q})$.
	We then denote $\varphi^*=\varphi$ and consider the following two cases.
	
	\begin{subcase}$\gamma\notin D_{n,q}$.
	\end{subcase}
	\begin{subsubcase}
		$\beta\notin D_{n,q}$.
	\end{subsubcase}
	
	By Claim \ref{bchange}, $P_{v(\beta)} (\beta, \gamma, \varphi) = P_{y_j} (\beta, \gamma, \varphi)$ and $P_{y_p} (\beta, \gamma, \varphi)$ is a different path. Let $\varphi^*=\varphi/P_{y_p}(\beta,\gamma,\varphi)$. Then by Claim~\ref{bstablechange}, $\varphi^*$ is $T_{n,q}$-stable and $T_{n,q}$ satisfies conditions MP, R1, R2 under $\varphi^*$. Since $\gamma,\beta\notin\varphi_e(T_{y_j}-T_{n,q})$ and $v\prec_l y_j$, and moreover $\gamma,\beta\notin\Gamma^{q}$, $T$ satisfies conditions MP, R1, and R2 under $\varphi^*$. Now $\gamma\in\phibar^*(y_p)\cap\phibar^*(y_j)$, where we have $max(I_{\varphi})\geq p(T)$.
	\begin{subsubcase}$\beta=\delta_m\in D_{n,q}$ for some $m \leq n$.
	\end{subsubcase}
	In this case $\gamma_{m1},\gamma_{m2}\notin\varphi((T_{y_{j}}- T_{n,q}))$ by our choice on $\beta$. By Claim \ref{bchange}, $P_{v(\beta)} (\beta, \gamma_{m1}, \varphi) = P_{v(\gamma_{m1})} (\beta, \gamma_{m1}, \varphi)$ and $P_{y_p} (\beta, \gamma_{m1}, \varphi)$ a different path. Let $\varphi^*=\varphi/P_{y_p}(\gamma_{m1},\beta,\varphi)$. By Claim~\ref{bstablechange}, $T_{n,q}$ is an ETT satisfying conditions MP, R1 and R2 under the $T_{n,q}$-stable coloring $\varphi^*$. Moreover, $T$ satisfies conditions MP, R1 and R2 under $\varphi^*$ since $\delta_m=\beta\in\phibar_v(T_{y_{j-2}})$ and $\beta,\gamma_{m1}\notin\varphi_e(T_{y_{j}}-T_{n,q})$.
	Similarly by Claim \ref{bchange} again, $P_{v(\beta)} (\gamma, \gamma_{m1}, \varphi^*) = P_{v(\gamma_{m1})} (\gamma, \gamma_{m1}, \varphi^*)$ and $P_{y_p} (\gamma, \gamma_{m1}, \varphi^*)$ a different path. Note that we have $\gamma,\gamma_{m1}\notin\varphi^*_e(T_{y_{j}}-T_{n,q})$. Let $\varphi^{**}=\varphi^*/P_{y_p}(\gamma_{m1},\gamma,\varphi^*)$. By Claim~\ref{bstablechange}, $T_{n,q}$ satisfies conditions MP, R1 and R2 under the $T_{n,q}$-stable coloring $\varphi^{**}$. Since $\delta_m\in\phibar^*_v(T_{y_{j-2}})$ and $\gamma,\gamma_{m1}\notin\varphi^*_e(T_{y_{j}}-T_{n,q})$, $T$ satisfies conditions MP, R1 and R2 under $\varphi^{**}$.
	However, we have $\gamma\in\phibar^{**}(y_p)\cap\phibar^{**}(y_j)$, where we have $max(I_{\varphi})\geq p(T)$.

	\begin{subcase} $\gamma=\delta_m\in D_{n,q}$ for some $m \leq n$.
	\end{subcase}
	By condition R2, $\gamma_{m1},\gamma_{m2}\notin\varphi_e(T_{y_{j}}-T_{n,q})$. Recall that $\beta\notin\varphi_e(T_{y_{j}}-T_{n,q})$. By Claim \ref{bchange}, $P_{v(\beta)}(\beta, \gamma_{m1}, \varphi) = P_{v(\gamma_{m1})} (\beta, \gamma_{m1}, \varphi)$ and $P_{y_p} (\beta, \gamma_{m1}, \varphi)$ is a different path. Let $\varphi^*=\varphi/P_{y_p}(\gamma_{m1},\beta,\varphi)$. By Claim~\ref{bstablechange}, $T_{n,q}$ is an ETT satisfying conditions MP, R1 and R2 under the $T_{n,q}$-stable coloring $\varphi^*$. Moreover, $T$ satisfies conditions MP, R1 and R2 under $\varphi^*$ since $\delta_m=\gamma\in\phibar_v(T_{y_{j}})$ and $\beta,\gamma_{m1}\notin\varphi_e(T_{y_{j}}-T_{n,q})$. Note that $\beta,\gamma_{m1}\notin\varphi^*_e(T_{y_{j}}-T_{n,q})$.
	Similarly by Claim \ref{bchange}, $P_{v(\beta)} (\gamma, \gamma_{m1}, \varphi^*) = P_{v(\gamma_{m1})} (\gamma, \gamma_{m1}, \varphi^*)$ and $P_{y_p} (\gamma, \gamma_{m1}, \varphi^*)$ is a different path.  Let $\varphi^{**}=\varphi^*/P_{y_p}(\gamma_{m1},\gamma,\varphi^*)$. By Claim~\ref{bstablechange}, $T_{n,q}$ satisfies conditions MP, R1 and R2 under $\varphi^{**}$. Since $\delta_m\in\phibar^*_v(T_{y_{j}})$ and $\gamma,\gamma_{m1}\notin\varphi^*_e(T_{y_{j}}-T_{n,q})$, $T$ satisfies conditions MP, R1 and R2 under $\varphi^{**}$.
	Now we have
	$\delta_m\in\phibar^{**}(y_p)\cap\phibar^{**}(y_j)$, where we have $max(I_{\varphi})\geq p(T)$.

	This completes the proof of Case \ref{bcase4}. Now for all cases we arrive at a contradiction, which proved statement A.
\end{proof}

\begin{REM}\label{brm2}
	We can see from the proof of Theorem~\ref{bmain}, conditions MP and R1 are only used to show $T=T_k\cup\{f_k,b(f_k)\}$ is elementary if $T_k$ satisfies MP and R1 where $f_k$ is a connecting edge for each $0<k\leq n$. All the techniques we used during the proof are about reducing the number of vertices to obtain a contradiction with $T_k\cup\{f_k,b(f_k)\}$ being elementary. Therefore, if we can figure out new ways of adding a vertex to a closed ETT $T_k$ and proving the resulting ETT is elementary without R1, we may have a chance to tackle the conjecture.
	
\end{REM}

\bibliographystyle{plain}
\bibliography{Gold2016Sep22}






\end{document}